\newtheorem{theorem}{Theorem}[section]
\newtheorem{corollary}[theorem]{Corollary}
\newtheorem{proposition}[theorem]{Proposition}
\newtheorem{definition}[theorem]{Definition}
\newenvironment{proof}{{\par\addvspace{0.1cm}\noindent \bf Proof. }}{\hfill$\Box$\par\medskip} % do not use it if we use \documentclass{amsproc} %\bfseries\itshape %{\footnotesize roof. }
\newtheorem{remark}[theorem]{Remark}
\numberwithin{equation}{section}
\def\vf{\varphi}
\begin{document}

\title{The configuration space of equilateral and equiangular hexagons}%$n$-gons ($n\le 6$)

\author{Jun O'Hara}
%
%\date{}
%
\maketitle

\begin{abstract}
We study the configuration space of equilateral and equiangular spatial hexagons for any bond angle by giving explicit expressions of all the possible shapes. We show that the chair configuration is isolated, whereas the boat configuration allows one-dimensional deformations which form a circle in the configuration space. 
\end{abstract}

\medskip
{\small {\it Key words and phrases}. Configuration space, equiangular, polygon. }

{\small 2000 {\it Mathematics Subject Classification.} Primary 51N20; Secondary 51H99, 55R80}%

%%%%%%%%%%%%%%%%%%%%%%%%%%%%%%%%%%%%%%%%%%%%%%%%%%%%%%%%%%%%%%%%%%%%%%%%%%%%%%%%
\section{Introduction} 
%%%%%%%%%%%%%%%%%%%%%%%%%%%%%%%%%%%%%%%%%%%%%%%%%%%%%%%%%%%%%%%%%%%%%%%%%%%%%%%%

Let $\mathcal P$ be a polygon with $n$ vertices in $\mathbb R^3$. 
We express $\mathcal P$ by its vertices, $\mathcal P=(P_0, P_1, \cdots, P_{n-1})$, with suffixes modulo $n$. 
A polygon $\mathcal P$ is called {\em equilateral} if the edge length $|P_{i+1}-P_i|$ is constant, and {\em equiangular} if the angle $\angle P_{i-1}P_iP_{i+1}$ is constant. 
This angle between two adjacent edges is called the {\em bond angle} and will be denote by $\theta$ in this paper. 
An equilateral and equiangular polygon can be considered as a mathematical model of a cycloalkane. %, especially when the bond angle is equal to $\cos^{-1}(-\frac13)$ (Figure \ref{tetrahedron}). 
We are interested in the set of all the possible shapes (which are called conformations in chemistry) of cycloalkanes when the number $n$ of carbons and the bond angle $\theta$ is fixed, i.e. in the language of mathematics, the configuration space of equilateral and equiangular polygons. 

Gordon Crippen studied it for $n\le 7$ (\cite{C}). 
To be precise, what he obtained is not the configuration space itself, but the space of the ``{\sl metric matrices}'', which are $n\times n$ matrices whose entries are inner products of pairs of edge vectors, and then he gave the corresponding conformations. 
When $n=4$ (cyclobutanes) and $n=5$ (cyclopentanes) he considered all the possible bond angles, 
%He gave the conformaton space of cyclobutanes ($n=4$, the bond angle $\theta$ belongs to $[0,\frac{\pi}4]$) and that of cyclopentanes ($n=5$, $\theta\in[\frac{\pi}5, \frac{3\pi}5]$). 
%
but when $n=6$ (cyclohexanes) and $n=7$ (cyclopentanes) he fixed the bond angle to be the ideal tetrahedral bond angle $\cos^{-1}(-\frac13)\approx 109.47^\circ$ (Figure \ref{tetrahedron}). 
He showed that if $n=6$ the conformation space is a union of a circle which contains a {\em boat} (Figure \ref{boat6}) and an isolated point of a {\em chair} (Figure \ref{chair6}), and that if $n=7$ it consists of two circles, one for boat/twist-boat and the other for chair/twist-chair. 
In these two cases, he showed it by searching out all the possible values of the entries of the metric matrix through numerical experiment with 0.05 step size. 

%\vspace{-0.4cm}
\begin{figure}[htbp]
\begin{center}
\begin{minipage}{.28\linewidth}
\begin{center}
\includegraphics[width=.9\linewidth]{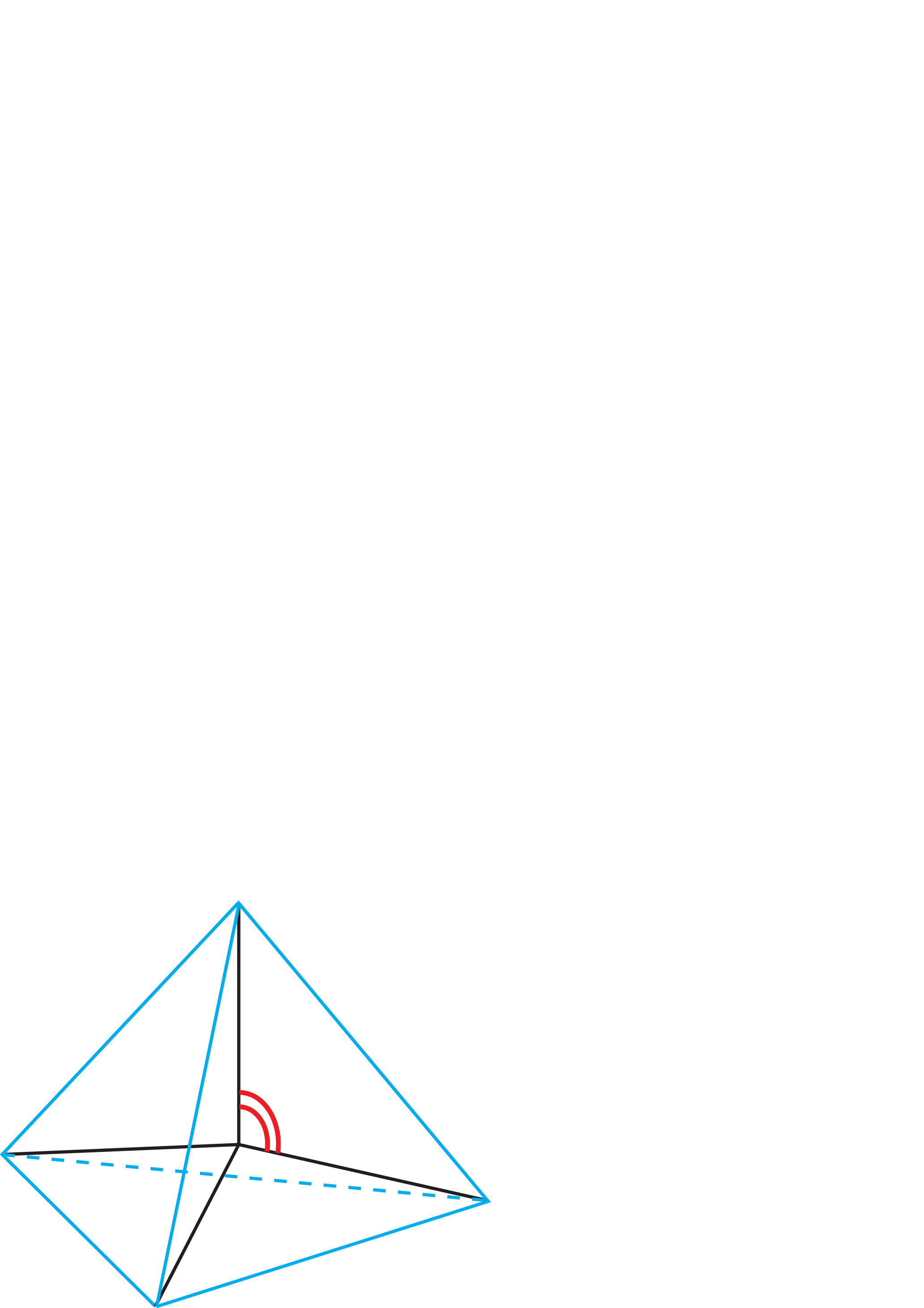}
\caption{$\theta=\cos^{-1}(-\frac13)$}
\label{tetrahedron}
\end{center}
\end{minipage}
\hskip 0.5cm
\begin{minipage}{.25\linewidth}
\begin{center}
\includegraphics[width=\linewidth]{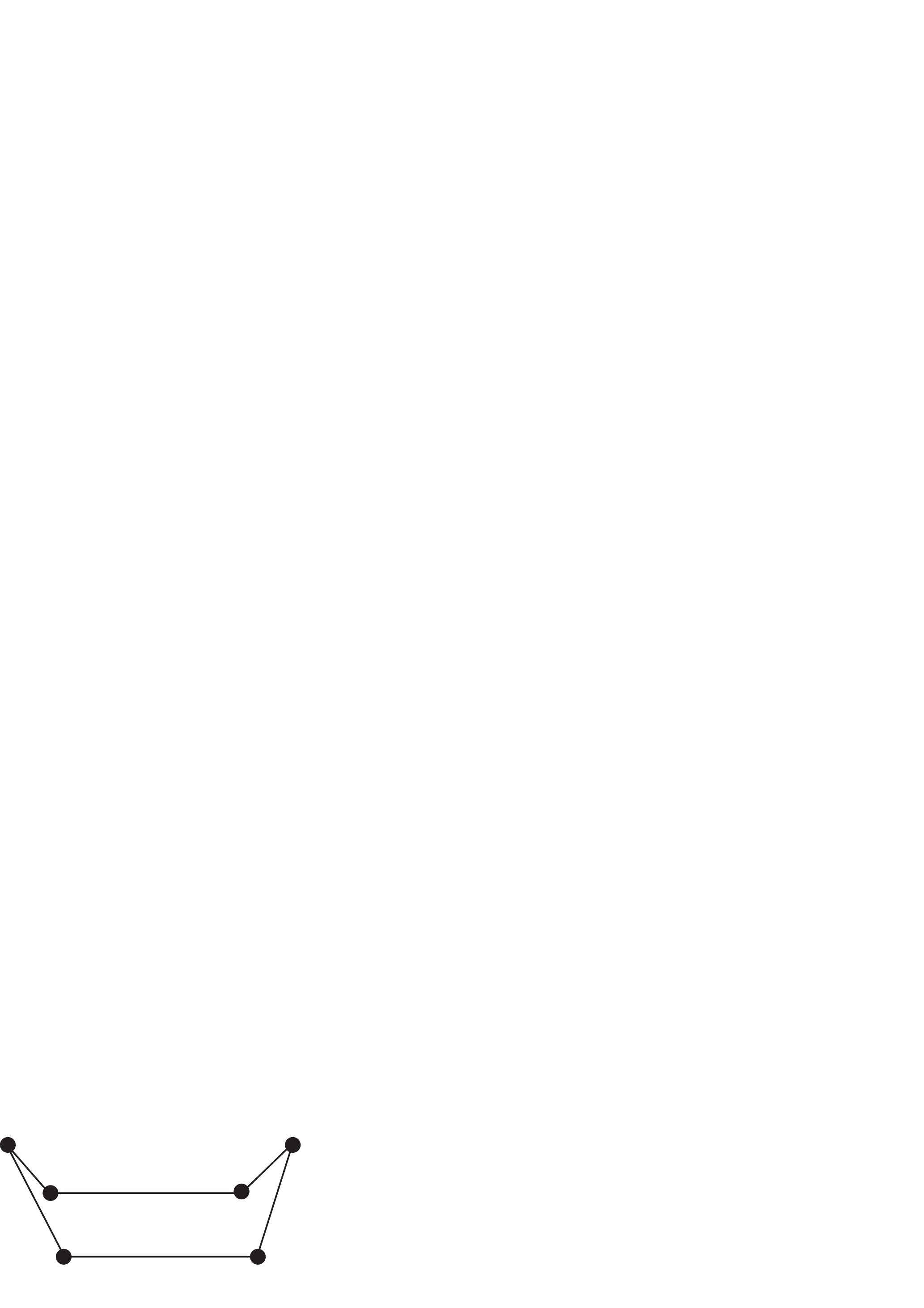}
\caption{{\em boat}}
\label{boat6}
\end{center}
\end{minipage}
\hskip 0.5cm
\begin{minipage}{.25\linewidth}
\begin{center}
\includegraphics[width=\linewidth]{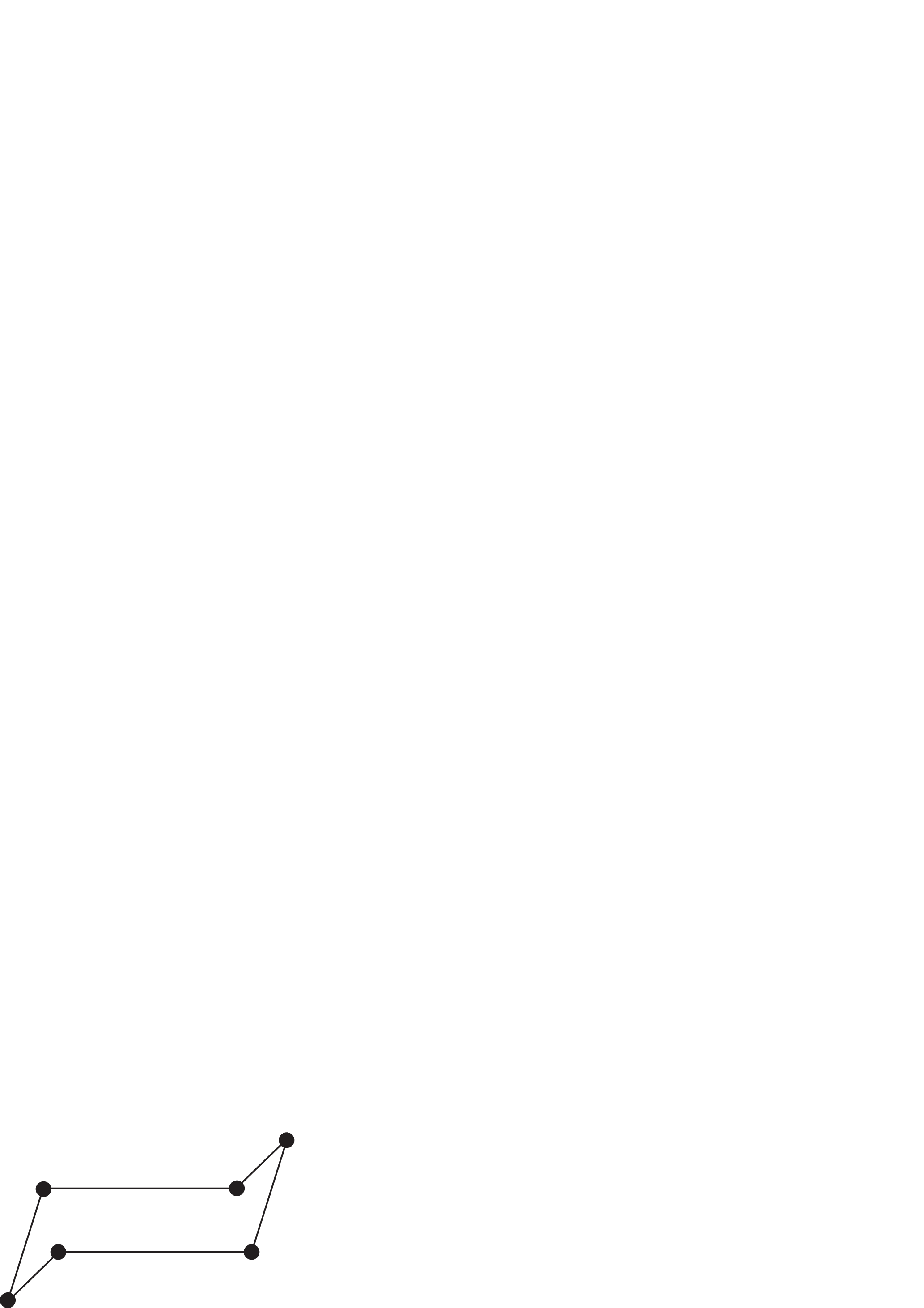}
\caption{{\em chair}}
\label{chair6}
\end{center}
\end{minipage}
\end{center}
\end{figure}
%\vspace{-0.4cm}

In this paper we study the configuration space of hexagons for any bond angle. 
We give explicit expressions of all the possible configurations in terms of our parameters illustrated in Figure \ref{double-cone}. 
We show that the topological type of the configuration space depends on the bond angle $\theta$. 

If $\theta$ is big ($\frac\pi3<\theta<\frac{2\pi}3$) the situation is same as that of cyclohexanes of ideal tetrahedral bond angle studied by Crippen. 
On the other hand, if $\theta$ is small ($0<\theta<\frac{\pi}3$), a new configuration (``{\sl inward crown}'' illustrated in Figure \ref{cylinder}) appears, and the one dimensional continuum of deformation of a boat is divided into two pieces, which implies that we cannot deform a boat into its mirror image. We remark that we distinguish vertices in our study. Hence our configuration space is not equal to the space of shapes. 
We have an exceptional case if $\theta=\frac\pi3$, when the inward crown, which degenerates to a doubly covered triangle, can be deformed to boats via newly appeared families of configurations. 

In any case, a chair is an isolated configuration, whereas a boat allows one-dimensional deformations starting from and ending at it. 
Moving pictures of deformation of a boat can be found at 

\verb+http://www.comp.tmu.ac.jp/knotNRG/math/configuration.html+

Of course, as extremal cases, we have a $6$ times covered multple edge and a regular hexagon when $\theta=0,\frac{2\pi}3$. 

\medskip
There have been a great number of studies of the configuration spaces of {\em linkages}. 
An excellent survey can be found in \cite{D}. 
If we drop the condition of being equiangular, it is known that the space of equilateral polygons has a symplectic structure (\cite{K-M}). 

\medskip
This paper is based on the author's talk at ``Knots and soft-matter physics, Topology of polymers and related topics in physics, mathematics and biology'', YITP, Kyoto, 2008 ({A short announcement of the result without proof was reported in {\sl Bussei Kenkyuu} vol. 92 (2009) 119\,--\,122}). 

\medskip 
{\bf Notations}. Throughout the paper, we agree that $C=\cos(\frac\theta2)$ and $S=\sin(\frac\theta2)$. 
The suffixes are understood modulo $n$. 
The angle $\angle P_i$ means $\angle P_{i-1}P_iP_{i+1}$. 

%%%%%%%%%%%%%%%%%%%%%%%%%%%%%%%%%%%%%%%%%%%%%%%%%%%%%%%%%%%%%%%%%%%%%%%%%%%%%%%%
\section{Preliminaries}
%%%%%%%%%%%%%%%%%%%%%%%%%%%%%%%%%%%%%%%%%%%%%%%%%%%%%%%%%%%%%%%%%%%%%%%%%%%%%%%%

\begin{definition} \rm 
%Let $n$ be the number of vertices. 
%Let $\theta$ be the ``{\em bond angle}'', i.e. the angle between adjacent edges $(0\le\theta<\pi)$. 
Put 
\[\displaystyle \widetilde{\mathcal{M}}_n(\theta)=\left\{\begin{array}{c}
\mathcal{P}=(P_0, \ldots, P_{n-1})\\%[0.5mm]
(P_i\in\mathbb{R}^3)
\end{array}%\in\big(\mathbb{R}^3\big)^n 
\left|\begin{array}{l}
|P_i-P_{i+1}|=1, \\[0.5mm] %\>
\angle P_{i-1}P_iP_{i+1}=\theta \end{array} \hspace{0.2cm}(\forall i \>\>(\mbox{mod.}\> n)) \right\}.\right.\]
Let $G$ be %the group of motions of $\mathbb R^3$, i.e. 
the group of orientation preserving isometries of $\mathbb{R}^3$. 
%Any motion of $\mathbb R^3$ can be expressed as a composition of rotations and translations. 
%
Put $\mathcal{M}_n(\theta)=\widetilde{\mathcal{M}}_n(\theta)/G$, and call it the {\em configuration space of $\theta$-equiangular unit equilateral $n$-gons} $(0\le\theta<\pi)$. 
Let us denote the equivalence class of a plolygon $\mathcal{P}$ by $[\mathcal{P}]$. 
\end{definition}

\begin{remark}\rm 
\begin{enumerate}
\item We allow intersections of edges and overlapping of vertices. 
\item We distinguish the vertices when we consider our configuration space. Therefore, two {configurations} illustrated in Figures \ref{chair6} correspond to different points in $\mathcal{M}_6$, although their {\sl shapes} are the same. 
\item When we express an equilateral and equiangular polygon we may fix the first three vertices, $P_0, P_1$, and $P_2$. 
There are $(n-3)$ more vertices, whereas we have $(n-2)$ conditions for the lengths of the edges, and $(n-1)$ conditions for the angles. Therefore, we may expect that the dimension of $\mathcal{M}_n(\theta)$ is equal to $3(n-3)-(n-2)-(n-1)=n-6$ in general if the conditions are independent, which is not the case when $n\le6$ as we will see. 
\end{enumerate}
\end{remark}

%\vspace{-0.4cm}
\begin{figure}[htbp]
\begin{center}
\begin{minipage}{.45\linewidth}
\begin{center}
\includegraphics[width=.3\linewidth]{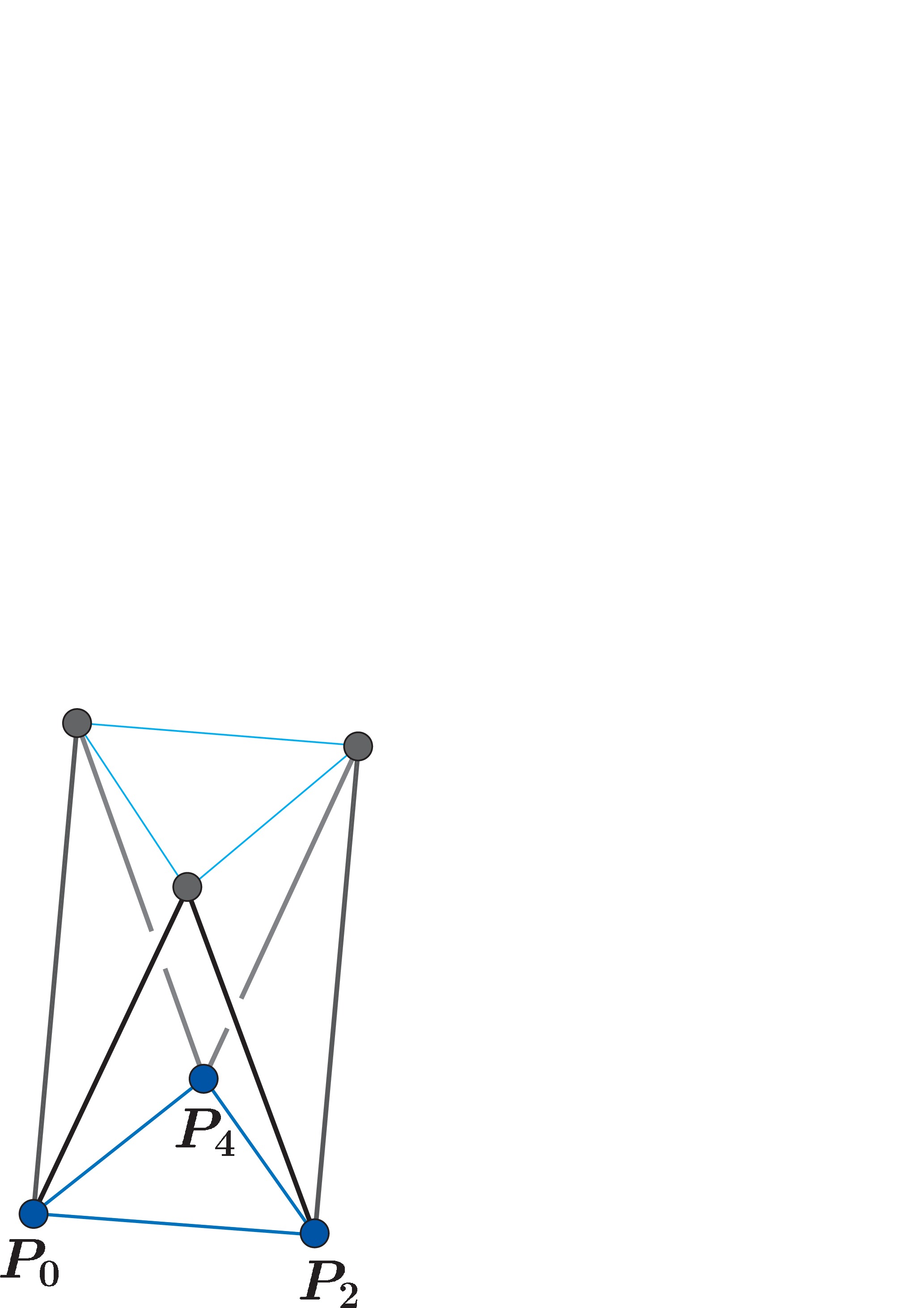}
%\caption{A chair when $\theta$ is small}
%\label{up-chair6}
\end{center}
\end{minipage}
\hskip 0.1cm
\begin{minipage}{.45\linewidth}
\begin{center}
\includegraphics[width=.3\linewidth]{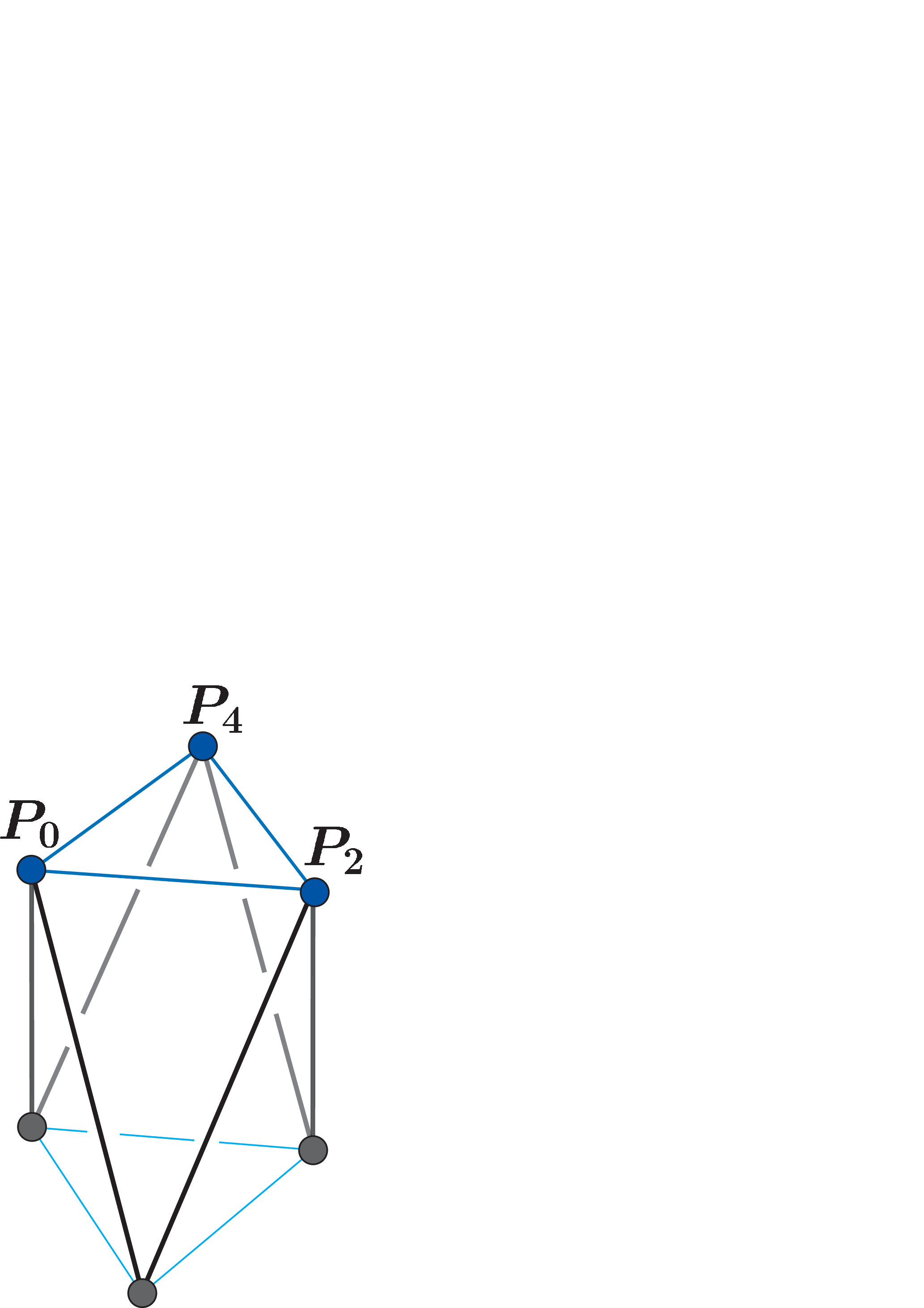}
%\caption{Another chair. A mirror image of figure \ref{up-chair6}}
%\label{down-chair6}
\end{center}
\end{minipage}
\caption{A chair when $\theta$ is small and its mirror image}
\label{udchair6}
\end{center}
\end{figure}
%\vspace{-0.4cm}

%%%%%%%%%%%%%%%%%%%%%%%%%%%%%%%%%%%%%%%%%%%%%%%%%%%%%%%%%%%%%%%%%%%%%%%%%%%%%%%%
%%%%%%%%%%%%%%%%%%%%%%%%%%%%%%%%%%%%%%%%%%%%%%%%%%%%%%%%%%%%%%%%%%%%%%%%%%%%%%%%

When $n\le5$ the configuration space is given as follows. 

\begin{proposition} {\rm (\cite{C})} 
The configuration spaces $\mathcal{M}_n(\theta)$ of equilateral and $\theta$-equiangular $n$-gons $(n=3,4,5)$ and the shapes of polygons which correspond to the elements are given by the following. 
\[\begin{array}{l}
\mathcal M_3(\theta)\cong\left\{\hspace{-0.1cm}
\begin{array}{lll}
\{\mbox{\rm $1$ point}\} & (\theta=\frac{\pi}3) & \mbox{\rm regular triangle} \\
\>\emptyset & \mbox{\rm otherwise} &  \\
\end{array}
\right.\\[5mm]
\mathcal M_4(\theta)\cong\left\{\hspace{-0.1cm}
\begin{array}{lll}
\{\mbox{\rm $1$ point}\} & (\theta=0) & \mbox{\rm $4$-folded edge (Figure \ref{n=4_case} left)} \\
\{\mbox{\rm $2$ points}\} & (0<\theta<\frac{\pi}4) & \mbox{\rm folded rhombus (Figure \ref{n=4_case} center) and its mirror image}  \\
\{\mbox{$1$ point}\} & (\theta=\frac{\pi}4) & \mbox{\rm square (Figure \ref{n=4_case} right)}  \\
\>\emptyset &  \mbox{\rm otherwise,} & \\
\end{array}
\right.\\[9mm]
\mathcal M_5(\theta)\cong\left\{\hspace{-0.1cm}
\begin{array}{lll}
\{\mbox{\rm $1$ point}\} & (\theta=\frac{\pi}5) & \mbox{\rm regular star shape (Figure \ref{star-pentagon})}  \\
\{\mbox{\rm $1$ point}\} & (\theta=\frac{3\pi}5) & \mbox{\rm regular pentagon (Figure \ref{regular_pentagon})}  \\
\>\emptyset &  \mbox{\rm otherwise.} & \\
\end{array}
\right.%\\[7mm]
\end{array}
\]
\begin{figure}[htbp]
\begin{center}
\begin{minipage}{.3\linewidth}
\begin{center}
\includegraphics[width=0.486\linewidth]{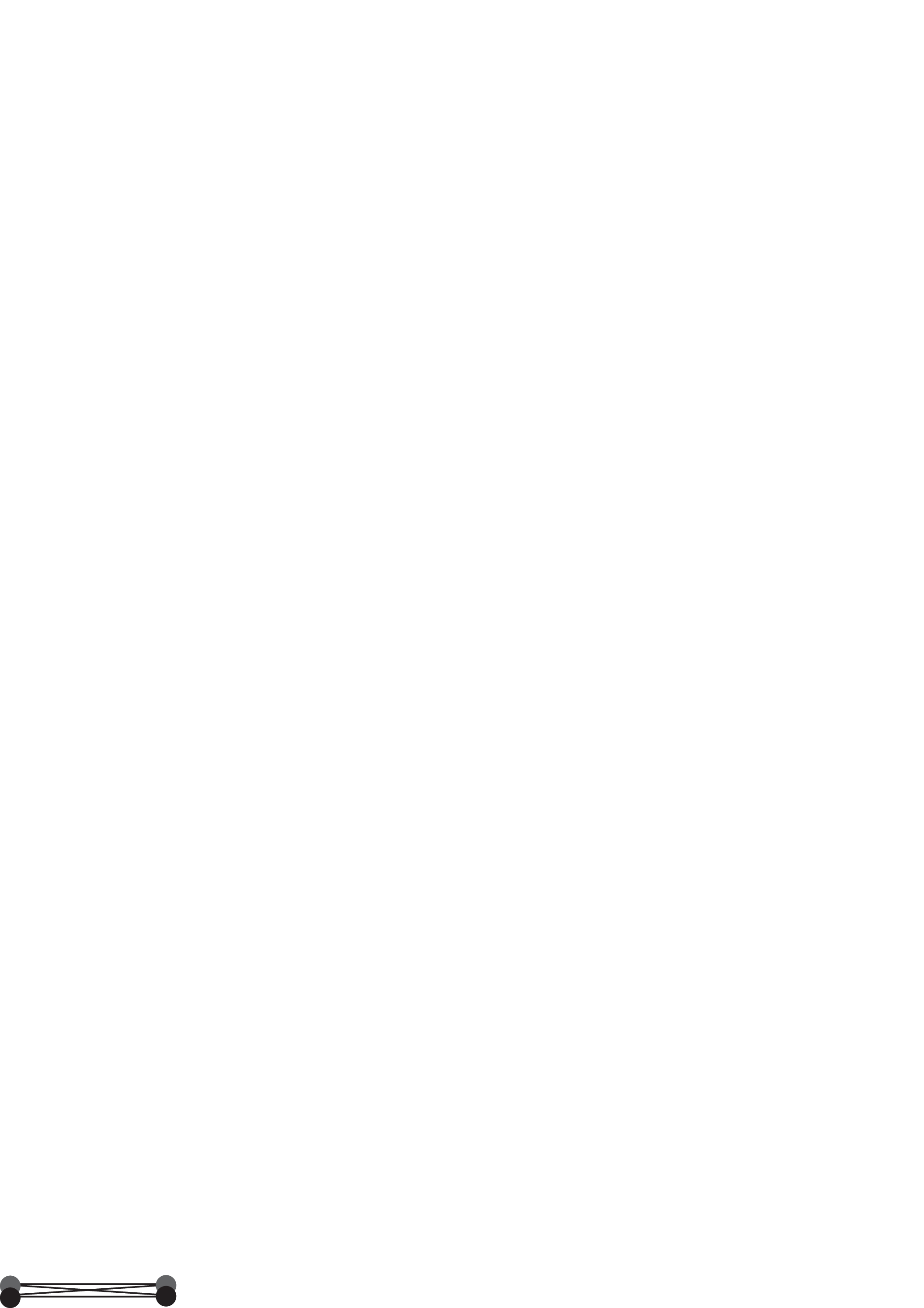}
%\label{4-covered-edge}
\end{center}
\end{minipage}
\hskip 0.2cm
\begin{minipage}{.3\linewidth}
\begin{center}
\includegraphics[width=0.648\linewidth]{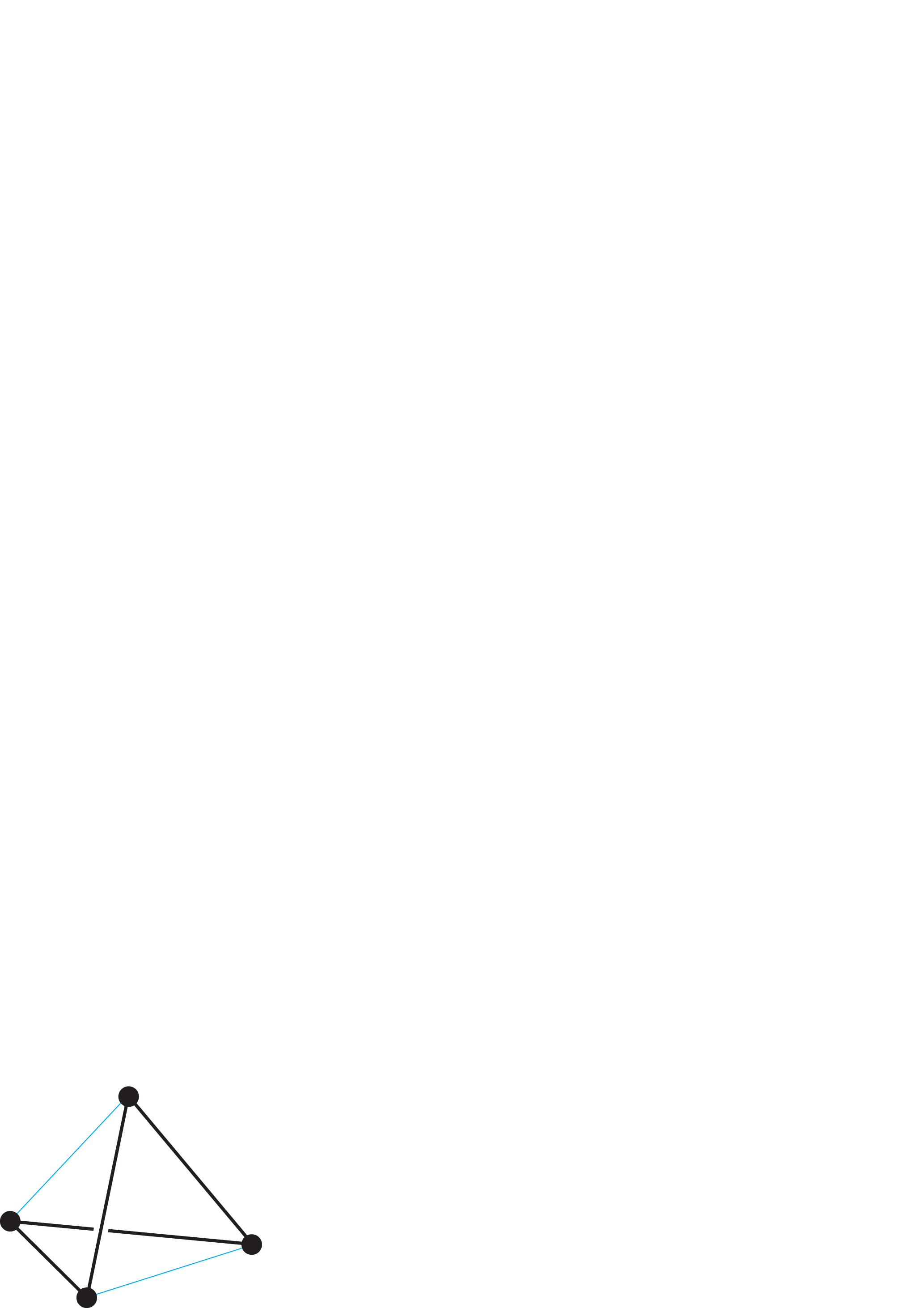}
%\label{quadrilateral-tetrahedron}
\end{center}
\end{minipage}
\hskip 0.2cm
\begin{minipage}{.3\linewidth}
\begin{center}
\includegraphics[width=0.48\linewidth]{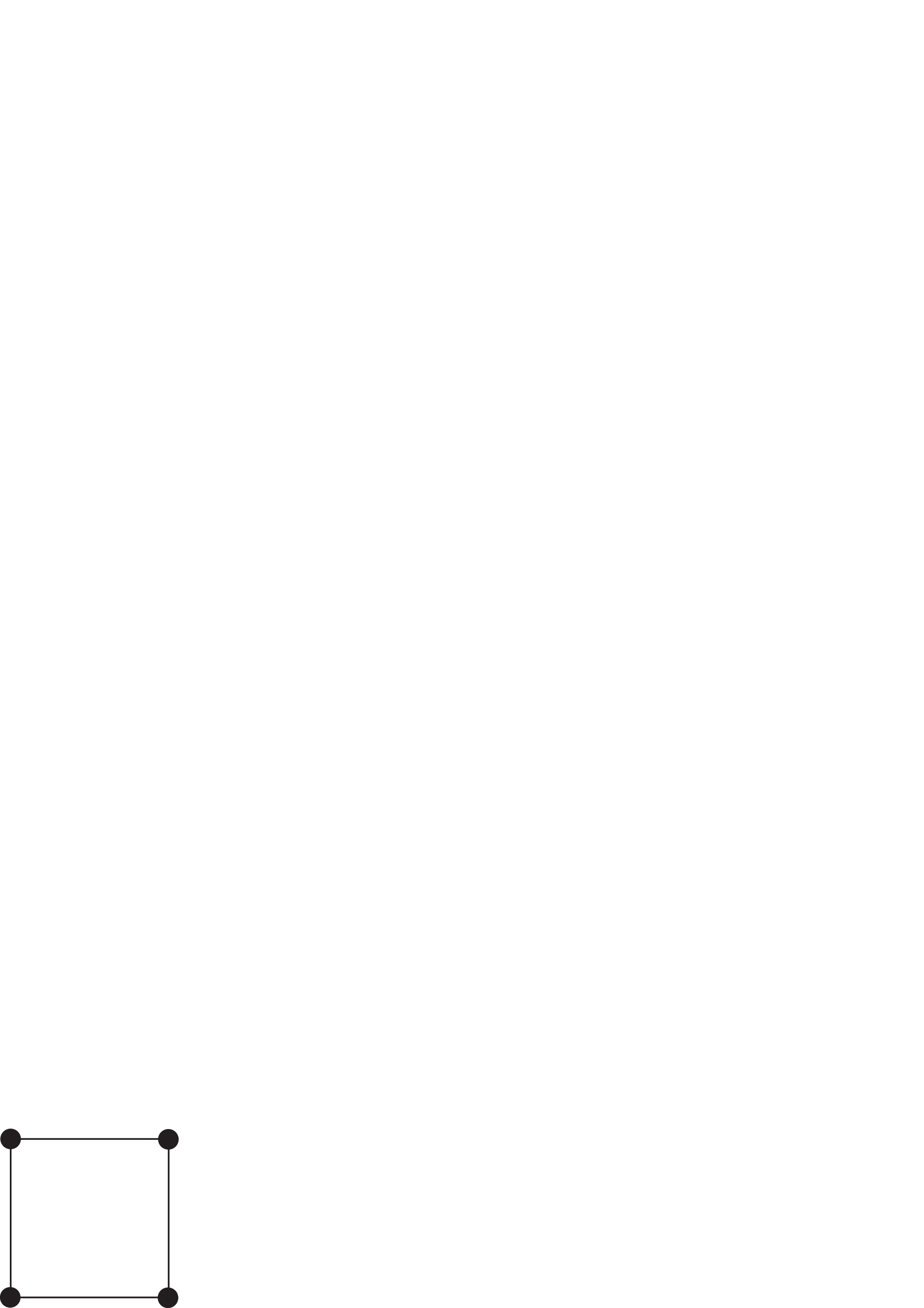}
%\label{square}
\end{center}
\end{minipage}
\caption{$n=4$ case. The middle is a non-planar configuration.}
\label{n=4_case}
\end{center}
\end{figure}
%\vspace{-0.4cm}
%
\begin{figure}[htbp]
\begin{center}
\begin{minipage}{.3\linewidth}
\begin{center}
\includegraphics[width=.4126\linewidth]{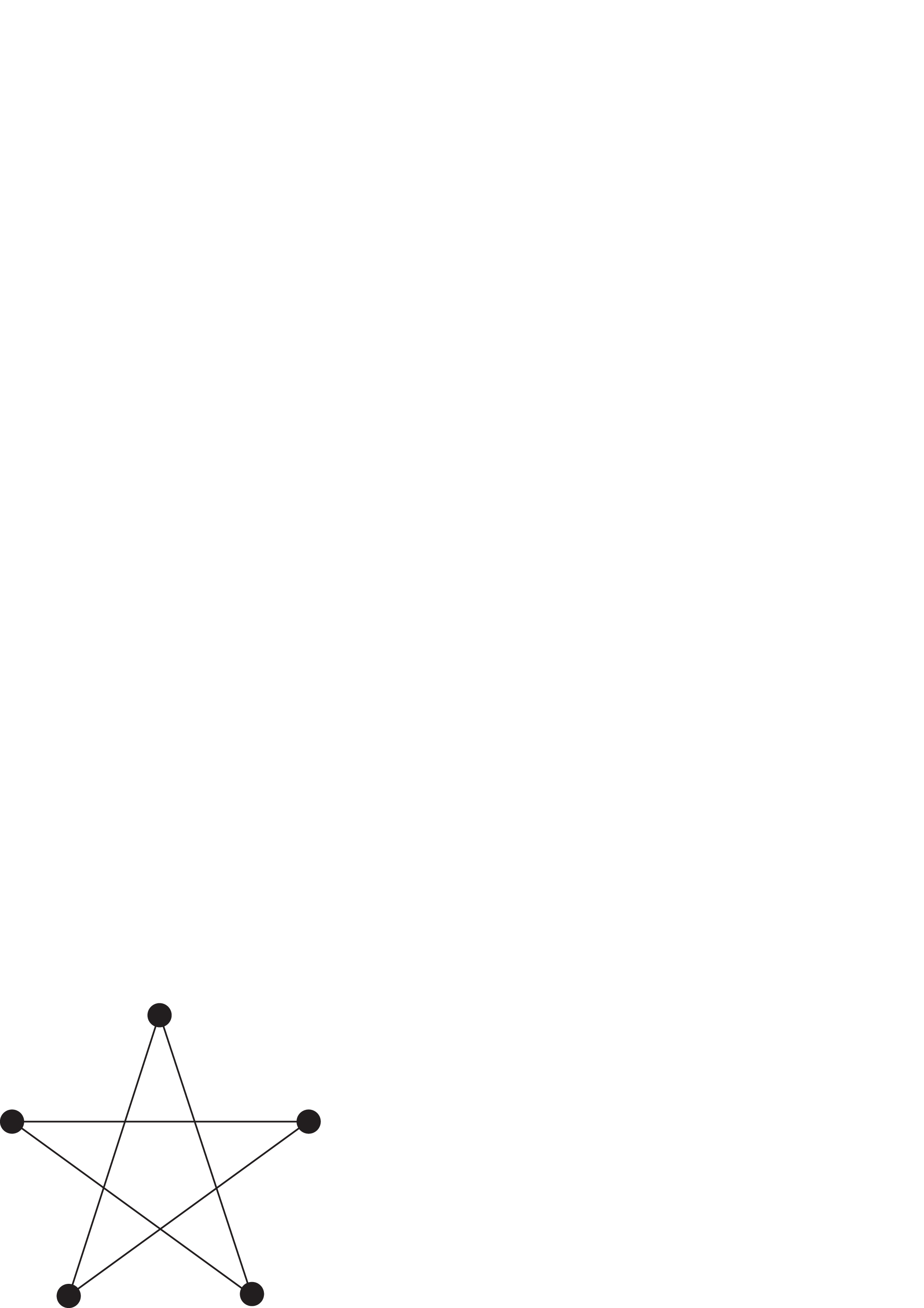}
\caption{Regula star shape}
\label{star-pentagon}
\end{center}
\end{minipage}
\hskip 0.4cm
\begin{minipage}{.3\linewidth}
\begin{center}
\includegraphics[width=.667\linewidth]{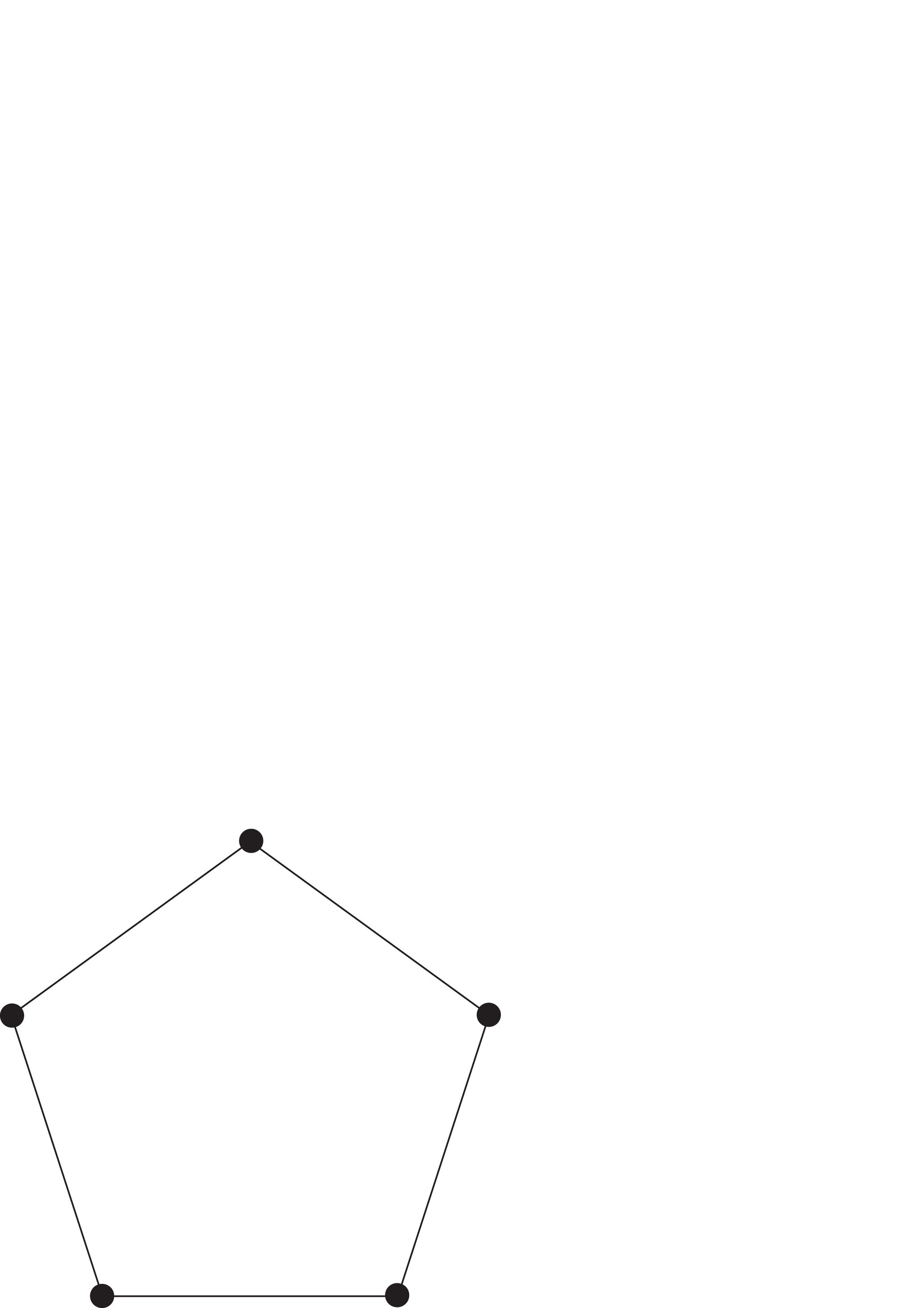}
\caption{Regular pentagon}
\label{regular_pentagon}
\end{center}
\end{minipage}
\end{center}
\end{figure}
\end{proposition}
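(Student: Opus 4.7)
The plan is to handle each $n\in\{3,4,5\}$ separately, exploiting one uniform observation: the law of cosines applied at vertex $P_i$ forces
\[ |P_{i-1}P_{i+1}|^2 \,=\, 2-2\cos\theta \,=\, 4S^2, \]
so every pair of vertices at cyclic graph-distance two automatically sits at Euclidean distance $2S$; this rigidifies the polygon more and more as $n$ decreases. The case $n=3$ is then immediate, since three unit edges force all three angles to equal $\pi/3$, so $\mathcal{M}_3(\theta)$ is empty unless $\theta=\pi/3$, in which case it is the single $G$-orbit of the equilateral triangle.

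For $n=4$, the observation forces both diagonals to satisfy $|P_0P_2|=|P_1P_3|=2S$. I would normalise $P_1$ to the origin with $P_0=(C,S,0)$ and $P_2=(C,-S,0)$. The conditions $|P_0P_3|=|P_2P_3|=1$ confine $P_3$ to the circle $(x-C)^2+z^2=C^2$ in the plane $\{y=0\}$; parametrising $P_3=(C-C\cos\alpha,0,C\sin\alpha)$, the remaining condition $|P_1P_3|=2S$ reduces to $\cos\alpha=1-2S^2/C^2$. Requiring $|\cos\alpha|\le 1$ gives $S\le C$, i.e.\ $\theta\le\pi/2$: one obtains two mirror-image solutions $\pm\alpha$ for $0<\theta<\pi/2$ (a folded rhombus and its chiral partner, inequivalent under the orientation-preserving group $G$ because they share the common sub-configuration $P_0,P_1,P_2$ and any $g\in G$ fixing three non-collinear points is trivial), a single planar solution $\alpha=\pi$ at $\theta=\pi/2$ (the square), a degenerate four-folded edge at $\theta=0$, and no solution for $\theta>\pi/2$.

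For $n=5$, the observation together with the five unit edges fixes all $\binom{5}{2}=10$ pairwise distances, so the polygon is rigidly determined by $\theta$; the question reduces to when those prescribed distances are realisable in $\mathbb{R}^3$. Concretely, fix $P_0,P_1,P_2$ as before, parametrise $P_3$ by an angle $\varphi$ on the cone of opening $\theta$ at $P_2$, and $P_4$ by $\psi$ on the analogous cone at $P_0$. A direct expansion shows $|P_1P_3|=|P_1P_4|=2S$ automatically. The three remaining conditions $|P_0P_3|=2S$, $|P_2P_4|=2S$, $|P_3P_4|=1$ become polynomial equations in $(\cos\varphi,\cos\psi)$: the first two determine $\cos\varphi$ and $\cos\psi$ as rational functions of $S,C$, and substituting into the third and clearing denominators yields a polynomial in $S^2$ whose only roots in $(0,1)$ are $S^2=(3\pm\sqrt{5})/8$, i.e.\ $\theta=\pi/5$ and $\theta=3\pi/5$. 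At each of these values the realised configuration is planar (the pentagram and the regular pentagon respectively), forcing $\sin\varphi=\sin\psi=0$, so the parameter space collapses to a single $G$-orbit.

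The main obstacle is the $n=5$ elimination — showing that the polynomial extracted from the three residual constraints factors to give exactly the two admissible roots and nothing else. An equivalent but conceptually cleaner route is the $6\times 6$ Cayley--Menger determinant of the five vertices with the prescribed pairwise distances: requiring it to vanish (five points of $\mathbb{R}^3$ must lie in a three-dimensional affine subspace) produces the same polynomial in $\cos\theta$, which factors over $\mathbb{Q}[\sqrt{5}]$ to isolate precisely the pentagram and the regular pentagon.
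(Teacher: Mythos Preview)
Your direct-elimination approach for $n=5$ is essentially the paper's: fix three vertices, parametrise the remaining two by cone angles, and reduce the residual constraints to a polynomial in $\cos\theta$. The paper, however, fixes the non-consecutive triple $P_0,P_1,P_4$ rather than your consecutive $P_0,P_1,P_2$; the reflective symmetry across the bisector of $\angle P_1P_0P_4$ then makes the two parameters interchangeable, so one obtains $\cos\varphi_2=\cos\varphi_3$ immediately from $|P_1P_3|=|P_2P_4|$ and only the single dichotomy $\sin\varphi_2=\pm\sin\varphi_3$ remains to be resolved against the last constraint. Your choice loses this symmetry, and you gloss over the analogue: after $\cos\varphi,\cos\psi$ are pinned down, the condition $|P_3P_4|=1$ still carries the cross term $\sin\varphi\,\sin\psi$, so ``substitute and clear denominators'' does not yet give a polynomial in $S^2$ without a case split on signs. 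The concluding assertion that the configurations at $\theta=\pi/5,\,3\pi/5$ are planar (hence $\sin\varphi=\sin\psi=0$) is exactly what that case split must establish, so as written it is circular.

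Your observation that all $\binom{5}{2}$ pairwise distances are already prescribed is sharper than anything in the paper and makes the Cayley--Menger route a genuine alternative: it trades the parametrise-and-eliminate computation for a single determinantal condition and renders the rigidity (uniqueness up to isometry) immediate. Two caveats you should make explicit: vanishing of the $6\times6$ determinant is necessary but not sufficient for embeddability in $\mathbb{R}^3$, so the sign conditions on the $5\times5$ minors must still be checked at the candidate roots; and uniqueness modulo the orientation-preserving group $G$ (as opposed to the full isometry group) follows only because the solutions happen to be planar and hence $G$-equivalent to their mirror images.

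Incidentally, your $n=4$ computation places the square at $\theta=\pi/2$, whereas the stated proposition prints $\pi/4$. Your value is the correct one (consistent with the paper's own conventions elsewhere, e.g.\ the regular hexagon at $\theta=2\pi/3$ and the regular pentagon at $\theta=3\pi/5$); the printed $\pi/4$ is evidently a typo.
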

\begin{proof}
The cases when $n=3,4$ are obvious. 

Suppose $n=5$. Let $(P_0,\ldots, P_4)\in\widetilde{\mathcal{M}}_5(\theta)$. Then $\theta\ne0,\pi$. %Put $C=\cos\theta$ and $S=\sin\theta$ in what follows. 
We may assume, after a motion of $\mathbb{R}^3$, that $P_0=(0,0,0), \,P_1=(1,0,0)$, and $P_4=(\cos\theta, \sin\theta, 0)$. 
Then $P_2$ and $P_3$ can be expressed by 
$$
P_2=\left(\begin{array}{c}
1-\cos\theta\\ \sin\theta\cos\vf_2\\ \sin\theta\sin\vf_2
\end{array}\right), 
\>
P_3=\left(\begin{array}{ccc}
\cos\theta & \sin\theta & 0\\
\sin\theta & -\cos\theta & 0\\
0 & 0 & 1
\end{array}
\right)
\left(\begin{array}{c}
1-\cos\theta\\ \sin\theta\cos\vf_3\\ \sin\theta\sin\vf_3\
\end{array}\right)
%=\left(\begin{array}{c}
%\cos\theta-\cos^2\theta+\sin^2\theta\cos\vf_3\\ \sin\theta-\sin\theta\cos\theta-\sin\theta\cos\theta\cos\vf_3\\ \sin\theta\sin\vf_3
%\end{array}\right)
$$
for some $\vf_2$ and $\vf_3$. 
Using symmetry in a plane that contains the angle bisector of $\angle P_1P_0P_4$ and the $z$-axis, we can deduce from $|P_2P_4|^2=|P_1P_3|^2=\left(2\sin\frac\theta2\right)^2$ that 
$$
\cos\vf_2=\cos\vf_3=\frac{1-2\cos\theta+2\cos^2\theta}{2\sin^2\theta},
$$
which implies $\sin\vf_2=\pm\sin\vf_3$. 
Then $\overrightarrow{P_1P_2}\cdot \overrightarrow{P_2P_3}=-\cos\theta$ implies 
\begin{equation}\label{f_n5}
\frac{-4\cos^2\theta+2\cos\theta+1}{4(1-\cos\theta)}=\sin^2\theta\sin\vf_2(\sin\vf_2-\sin\vf_3). 
\end{equation}
It follows that, whether $\sin\vf_2=\sin\vf_3$ or $\sin\vf_2=-\sin\vf_3$, $\cos\theta=\frac{1\pm\sqrt5}4$, namely, $\theta=\frac\pi5,\frac{3\pi}5$, and $\vf_2=\vf_3=0$, which means only regular star shape and regular pentagon can appear as equilateral and equiangular pentagons. 
\end{proof}
%

%%%%%%%%%%%%%%%%%%%%%%%%%%%%%%%%%%%%%%%%%%%%%%%%%%%%%%%%%%%%%%%%%%%%%%%%%%%%%%%%
\section{Equilateral and equiangular hexagons} 
%%%%%%%%%%%%%%%%%%%%%%%%%%%%%%%%%%%%%%%%%%%%%%%%%%%%%%%%%%%%%%%%%%%%%%%%%%%%%%%%

Put $C=\cos(\frac\theta2)$ and $S=\sin(\frac\theta2)$. 

First note that $P_0, P_2$, and $P_4$ form a regular triangle of edge length $2S$. We may fix 
\begin{equation}\label{P024}
P_0=\left(\!\begin{array}{c}
-S \\ 0 \\ 0
\end{array}\!\right), \>
P_2=\left(\!\begin{array}{c}
S \\ 0 \\ 0
\end{array}\!\right), \>
P_4=\left(\!\!\begin{array}{c}
0 \\ \sqrt3S \\ 0
\end{array}\!\!\right). 
\end{equation}
Any element in $\mathcal{M}_6(\theta)$ has exactly one representative hexagon with $P_0, P_2$, and $P_4$ being as above. 
Let us use a ``{\sl double cone}''or suspension expression of a hexagon (Figure \ref{double-cone}), namely, we express $P_1, P_3$, and $P_5$ by 
\begin{figure}[htbp]
\begin{center}
\includegraphics[width=.35\linewidth]{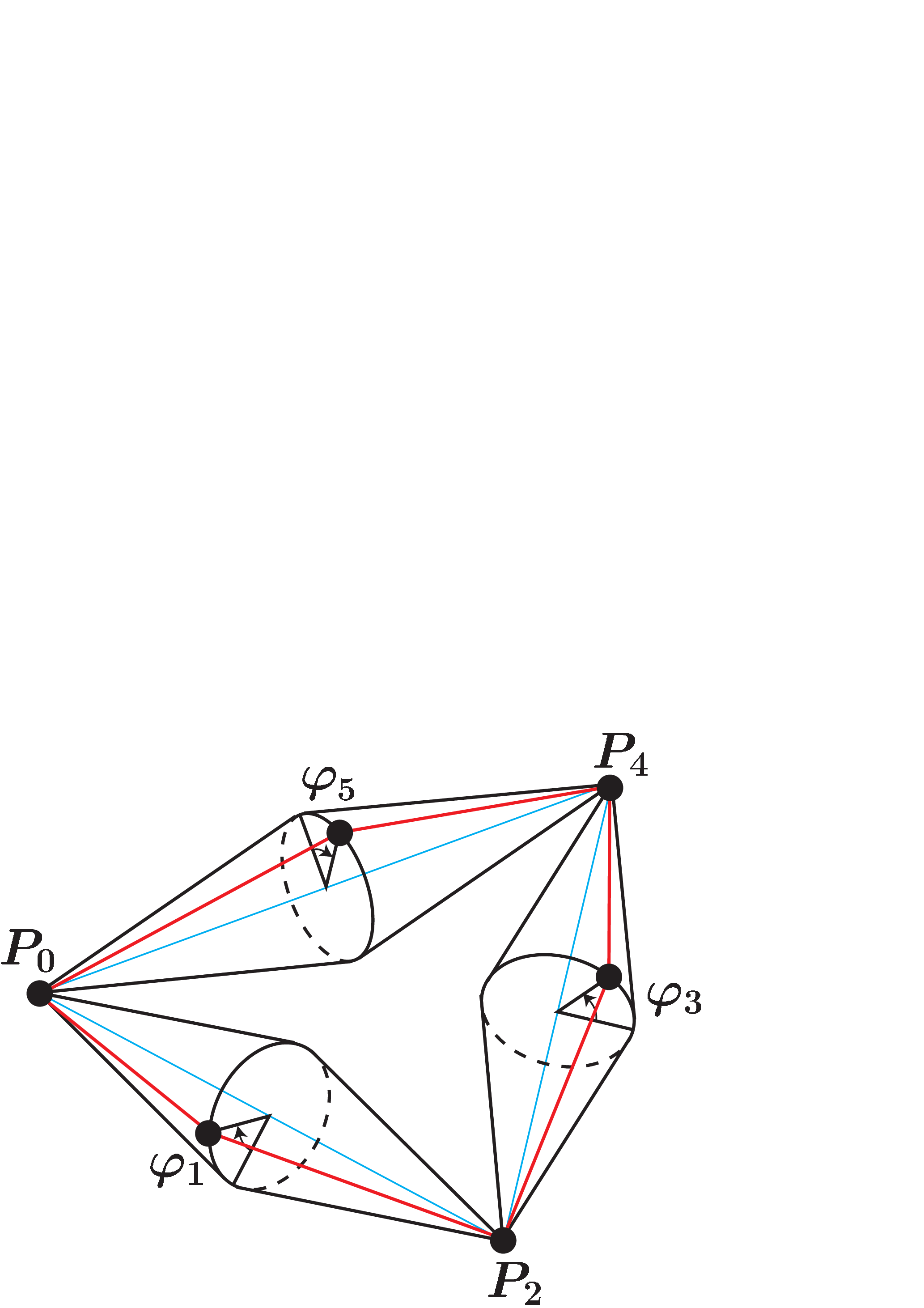}
\caption{Double cone or suspension expression}
\label{double-cone}
\end{center}
\end{figure}
\begin{equation}\label{P135}
P_1=\left(\!\begin{array}{c}
0 \\[1mm] -C \cos\vf_1 \\[1mm] \phantom{-}C\sin\vf_1
\end{array}\!\right), \>
P_3=\left(\!\begin{array}{c}
\frac{1}2S+\frac{\sqrt3}2C\cos\vf_3 \\[1mm] \frac{\sqrt3}2S+\frac{1}2C\cos\vf_3 \\[1mm] C\sin\vf_3
\end{array}\!\right), \>
P_5=\left(\!\begin{array}{c}
-\frac{1}2S-\frac{\sqrt3}2C\cos\vf_5\\[1mm] \frac{\sqrt3}2S+\frac{1}2C\cos\vf_5 \\[1mm] C\sin\vf_5
\end{array}\!\right)
\end{equation}
for some $\vf_1, \vf_3$, and $\vf_5$. 
Now the conditions $|P_j-P_{j+1}|=1$ $(\forall j)$ and $\angle P_1=\angle P_3=\angle P_5=\theta$ are satisfied. 

\smallskip
The condition $\angle P_{i+1}=\theta$ $(i=1,3,5)$ is equivalent to 
\begin{equation}\label{angle_condition}
C^2(\cos\vf_i\cos\vf_{i+2}-2\sin\vf_i\sin\vf_{i+2})+\sqrt3\,SC\,(\cos\vf_i+\cos\vf_{i+2})=3-5C^2,
\end{equation}
which is equivalent to 
\begin{equation}\label{angle_condition2}
C\big(C\cos\vf_i+\sqrt3\,S\,\big)\cos\vf_{i+2}-2C^2\sin\vf_i\sin\vf_{i+2}=3-5C^2-\sqrt3\,SC\,\cos\vf_i.
\end{equation}
Remark that the equations $ax+by=d$ $(a^2+b^2>0)$ and $x^2+y^2=1$ have solutions if and only if $a^2+b^2-d^2\ge0$, when we have 
\begin{equation}\label{xy}
x=\frac{ad\pm b\sqrt{a^2+b^2-d^2}}{a^2+b^2}, \> y=\frac{bd\mp a\sqrt{a^2+b^2-d^2}}{a^2+b^2}.
\end{equation}
In our case \eqref{angle_condition2}, by substituting 
\begin{equation}\label{abd}
a=C\big(C\cos\vf_i+\sqrt3\,S\,\big),\> b=-2C^2\sin\vf_i, \>d=3-5C^2-\sqrt3\,SC\,\cos\vf_i,\end{equation}
we have $a^2+b^2=4-{(S-\sqrt3\,C\,\cos\vf_i)}^2$, which is positive unless $\theta=\frac\pi3$ and $\vf_i=\pi$, and 
\begin{equation}\label{a^2+b^2-d^2}
a^2+b^2-d^2=-\sqrt3\big(C\,\cos\vf_i-\sqrt3S\,\big)\big(\sqrt3\,C\,\cos\vf_i-(3-8C^2)S\,\big). 
\end{equation}

It follows that when $(\theta,\vf_1)\ne(\frac\pi3,\pi)$ there are $\vf_3$ and $\vf_5$ so that $\angle P_0=\angle P_2=\theta$ if and only if $\cos\vf_1$ satisfies 
$$
\frac{(3-8C^2)S}{\sqrt3\,C}\le\cos\vf_1\le\frac{\sqrt3\,S}C,
$$
which can happen if and only if $C=\cos\frac\theta2\ge\frac12$, i.e. $0\le\theta\le\frac{2\pi}3$ {\rm (Figure )}.

\begin{figure}[htbp]
\begin{center}
\includegraphics[width=.42\linewidth]{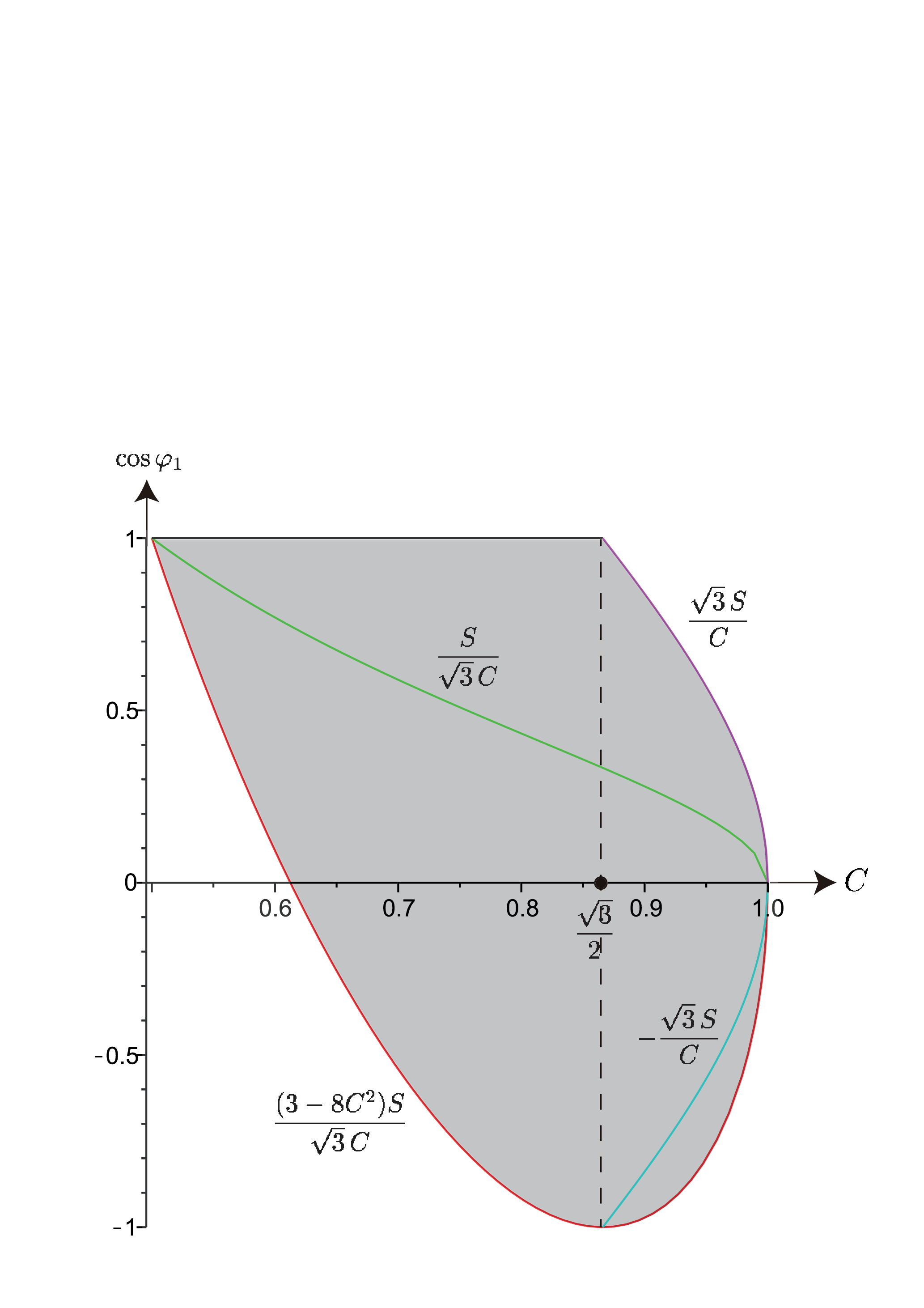}
\caption{The region of $\cos\vf_1$ so that there are $\vf_3$ and $\vf_5$ satisfying $\angle P_0=\angle P_2=\theta$}
\label{graph}
\end{center}
\end{figure}

\smallskip
(1) Let us first assume that $\theta$ and $\vf_1$ satisfy the conditions above mentioned %$(\theta,\vf_1)\ne(\frac\pi3,\pi)$ 
and search for the case when $\vf_3$ and $\vf_5$ that make $\angle P_0=\angle P_2=\theta$ also satisfy $\angle P_4=\theta$. 
(We will study the case when $(\theta,\vf_1)=(\frac\pi3,\pi)$ later.) 
We have two cases, either $\vf_3\ne\vf_5$ or $\vf_3=\vf_5$. 

\smallskip
Case I. 
Assume $\vf_3\ne\vf_5$. 
Remark that this can occur if and only if $\vf_1$ satisfies 
$$
\frac{(3-8C^2)S}{\sqrt3\,C}<\cos\vf_1<\frac{\sqrt3\,S}C \hspace{0.5cm}\left(0<\theta<\frac{2\pi}3\right). 
$$
%$\frac{(3-8C^2)S}{\sqrt3\,C}<\cos\vf_1<\frac{\sqrt3\,S}C$ $\left(0<\theta<\frac{2\pi}3\right)$. 
Then the conditions $\angle P_0=\angle P_2=\theta$ imply that $\vf_3$ and $\vf_5$ are given by 
\begin{equation}\label{vf3nevf5}
\{(\cos\vf_3,\sin\vf_3),(\cos\vf_5,\sin\vf_5)\}=\left\{\left(\frac{ad\pm b\sqrt{a^2+b^2-d^2}}{a^2+b^2},\,\frac{bd\mp a\sqrt{a^2+b^2-d^2}}{a^2+b^2}\right)\right\},
\end{equation}
where $a,b$, and $d$ are given by \eqref{abd}. 
Computating the left hand side of \eqref{angle_condition}, we have 
$$
C^2\left(\frac{a^2d^2-b^2(a^2+b^2-d^2)}{{(a^2+b^2)}^2}-2\,\frac{b^2d^2-a^2(a^2+b^2-d^2)}{{(a^2+b^2)}^2}\right)
+\sqrt3\,SC\,\frac{2ad}{a^2+b^2}=3-5C^2, 
$$
which implies that the condition $\angle P_4=\theta$ is always satisfied in this case. %$\angle P_0=\angle P_2=\theta$ and $\vf_3\ne\vf_5$ we always have $\angle P_4=\theta$. 
Now \eqref{P135} shows that $P_3$ and $P_5$ are given by 
\begin{equation}\label{P35}
\begin{array}{l}
\displaystyle 
P_3=\left(\begin{array}{c}
\displaystyle S-\frac{\sqrt3\,C\,\cos\vf_1-(3-8C^2)S\pm\sqrt3\,C\,\sin\vf_1\sqrt{a^2+b^2-d^2}}{4-{(S-\sqrt3\,C\,\cos\vf_i)}^2}\\[4mm]
\displaystyle \frac2{\sqrt3}S-\frac{C\,\cos\vf_1-\frac1{\sqrt3}(3-8C^2)S\pm C\,\sin\vf_1\sqrt{a^2+b^2-d^2}}{4-{(S-\sqrt3\,C\,\cos\vf_i)}^2}\\[4mm]
\displaystyle -\frac{2C\big(3-5C^2-\sqrt3\,SC\,\cos\vf_1\big)\sin\vf_1\pm\big(C\cos\vf_1+\sqrt3\,S\,\big)\sqrt{a^2+b^2-d^2}}{4-{(S-\sqrt3\,C\,\cos\vf_i)}^2}
\end{array}\right) \\[18mm]
\displaystyle 
P_5=\left(\begin{array}{c}
\displaystyle -S+\frac{\sqrt3\,C\,\cos\vf_1-(3-8C^2)S\mp\sqrt3\,C\,\sin\vf_1\sqrt{a^2+b^2-d^2}}{4-{(S-\sqrt3\,C\,\cos\vf_i)}^2}\\[4mm]
\displaystyle \frac2{\sqrt3}S-\frac{C\,\cos\vf_1-\frac1{\sqrt3}(3-8C^2)S\mp C\,\sin\vf_1\sqrt{a^2+b^2-d^2}}{4-{(S-\sqrt3\,C\,\cos\vf_i)}^2}\\[4mm]
\displaystyle -\frac{2C\big(3-5C^2-\sqrt3\,SC\,\cos\vf_1\big)\sin\vf_1\mp\big(C\cos\vf_1+\sqrt3\,S\,\big)\sqrt{a^2+b^2-d^2}}{4-{(S-\sqrt3\,C\,\cos\vf_i)}^2}
\end{array}\right) ,
\end{array}
\end{equation}
where $a^2+b^2-d^2$ is given by \eqref{a^2+b^2-d^2}.

\smallskip
Case II. Assume $\vf_3=\vf_5$. 
Let us first study the condition for $\angle P_4=\theta$ without assuming $\angle P_0=\angle P_2=\theta$. 
If $\vf_3=\vf_5$, which we denote by $\vf$, and $\angle P_4=\theta$, then \eqref{angle_condition} implies that $\vf$ must satisfy 
$$
\cos\vf=-\frac{\sqrt3\, S}C \>\>\mbox{ or }\>\> \frac{S}{\sqrt3\,C}.
$$

Case II-1. Assume 
$$
(\cos\vf_3, \sin\vf_3)=(\cos\vf_5, \sin\vf_5)=
\left(\frac{S}{\sqrt3\,C}\,, \,\frac{\sqrt{4C^2-1}}{\sqrt3\,C}\right). 
$$
Then \eqref{angle_condition} implies that $\angle P_0=\angle P_2=\theta$ if and only if 
$$
(\cos\vf_1, \sin\vf_1)=
\left(\frac{S}{\sqrt3\,C}\,, \, \frac{\sqrt{4C^2-1}}{\sqrt3\,C}\right)
\>\>\mbox{ or }\>\>
\left(\frac{(3-8C^2)S}{\sqrt3\,C}\,, \, \frac{(4C^2-3)\sqrt{4C^2-1}}{\sqrt3\,C}\right).
$$
Note that we have $\overrightarrow{P_0P_5}=\overrightarrow{P_2P_3}$ by \eqref{P135}. 
The points $P_1$ and $P_4$ are in the opposite (or same) side of the plane containing $P_0, P_2, P_3$, and $P_5$ if $\cos\vf_1=\frac{S}{\sqrt3\,C}$ (or respectively $\cos\vf_1=\frac{(3-8C^2)S}{\sqrt3\,C}$). 
Namely, the hexagon is a chair if $\vf_1=\vf_3=\vf_5$ and a boat if $\vf_1\ne\vf_3=\vf_5$ in this case. 
Both coincide if and only if $\theta=0$ or $\frac{2\pi}3$, when $\mathcal{P}$ is a $6$-times covered multiple edge or a regular hexagon. 
The chair is given by 
$$
P_1=\left(\!\begin{array}{c}
0 \\[1mm]  -\frac{S}{\sqrt3} \\[2mm]  \frac{\sqrt{4C^2-1}}{\sqrt3}
\end{array}\!\right), \>
P_3=\left(\!\begin{array}{c}
S \\[1mm]  \frac{2S}{\sqrt3} \\[2mm]  \frac{\sqrt{4C^2-1}}{\sqrt3}
\end{array}\!\right), \>
P_5=\left(\!\begin{array}{c}
-S \\[1mm]  \frac{2S}{\sqrt3} \\[2mm]  \frac{\sqrt{4C^2-1}}{\sqrt3}
\end{array}\!\right),
$$
whereas the boat is given by substituting $(\cos\vf_1, \sin\vf_1)=\left(\frac{(3-8C^2)S}{\sqrt3\,C}\,, \, \frac{(4C^2-3)\sqrt{4C^2-1}}{\sqrt3\,C}\right)$ to \eqref{P35}, 
$$
P_1=\left(\!\begin{array}{c}
0 \\[1mm]  -\frac{(3-8C^2)S}{\sqrt3} \\[2mm]  \frac{(4C^2-3)\sqrt{4C^2-1}}{\sqrt3}
\end{array}\!\right), \>
P_3=\left(\!\begin{array}{c}
S \\[1mm]  \frac{2S}{\sqrt3} \\[2mm]  \frac{\sqrt{4C^2-1}}{\sqrt3}
\end{array}\!\right), \>
P_5=\left(\!\begin{array}{c}
-S \\[1mm]  \frac{2S}{\sqrt3} \\[2mm]  \frac{\sqrt{4C^2-1}}{\sqrt3}
\end{array}\!\right).
$$

\smallskip
Case II-2. Assume 
$$
(\cos\vf_3, \sin\vf_3)=(\cos\vf_5, \sin\vf_5)=
\left(-\frac{\sqrt3\, S}C\,, \,\frac{\sqrt{4C^2-3}}{C}\right), 
$$
which can occur if and only if $\frac{\sqrt3}2\le C\le 1$, namely, $0\le\theta\le\frac\pi3$. 
Then \eqref{angle_condition} implies that $\angle P_0=\angle P_2=\theta$ if and only if 
$
2C\sqrt{4C^2-3}\,\sin\vf_1=8C^2-6.
$
Therefore, when $\theta\ne\frac\pi3$ (we will study the case when $\theta=\frac\pi3$ and $\vf_3=\vf_5=\pi$ later) then $\angle P_0=\angle P_2=\theta$ if and only if 
$$
(\cos\vf_1, \sin\vf_1)=
\left(-\frac{\sqrt3\, S}C\,, \,\frac{\sqrt{4C^2-3}}{C}\right)
\>\>\mbox{ or }\>\>
\left(\frac{\sqrt3\, S}C\,, \,\frac{\sqrt{4C^2-3}}{C}\right).
$$
Note that $P_3$ and $P_5$ are above $P_0$ and $P_2$ respectively. 
When $\vf_1=\vf_3=\vf_5$ the hexagon is an ``inward crown'' (Figure \ref{cylinder}) given by 
$$
P_1=\left(\!\begin{array}{c}
0 \\[1mm]  \sqrt3\,S \\[2mm]  \sqrt{4C^2-3}
\end{array}\!\right), \>
P_3=\left(\!\begin{array}{c}
-S \\[1mm]  0 \\[2mm]  \sqrt{4C^2-3}
\end{array}\!\right), \>
P_5=\left(\!\begin{array}{c}
S \\[1mm]  0\\[2mm]  \sqrt{4C^2-3}
\end{array}\!\right),
$$
whereas the other is given by substituting $(\cos\vf_1, \sin\vf_1)=\left(\frac{\sqrt3\, S}C\,, \,\frac{\sqrt{4C^2-3}}{C}\right)$ to \eqref{P35}, 
$$
P_1=\left(\!\begin{array}{c}
0 \\[1mm]  -\sqrt3\,S \\[2mm]  \sqrt{4C^2-3}
\end{array}\!\right), \>
P_3=\left(\!\begin{array}{c}
-S \\[1mm]  0 \\[2mm]  \sqrt{4C^2-3}
\end{array}\!\right), \>
P_5=\left(\!\begin{array}{c}
S \\[1mm]  0\\[2mm]  \sqrt{4C^2-3}
\end{array}\!\right).
$$

\smallskip
Let us summarize the argument above when $\theta\ne\frac\pi3$. 
\begin{theorem} Suppose a $\theta$-equiangular unit equilateral hexagon $(\theta\ne\frac\pi3)$ is parametrized by the agnles $\vf_1, \vf_3$, and $\vf_5$ by \eqref{P024}, \eqref{P135}. Let $C=\cos\left(\frac\theta2\right)$ and $S=\sin\left(\frac\theta2\right)$ as before. 

\begin{enumerate}
\item When $\theta=\frac{2\pi}3$ i.e. $C=\frac12$ we have $\vf_1=\vf_3=\vf_5=0$, which corresponds to a regular hexagon. 
\item When $\frac{\pi}3<\theta<\frac{2\pi}3$ i.e. $\frac12<C<\frac{\sqrt3}2$ we have $-\arccos\left(\frac{(3-8C^2)S}{\sqrt3\,C}\right)\le\vf_1\le\arccos\left(\frac{(3-8C^2)S}{\sqrt3\,C}\right)$. 
\begin{itemize}
\item[\rm (i)] When $\vf_1=\pm\arccos\left(\frac{(3-8C^2)S}{\sqrt3\,C}\right)$ we have $\vf_3=\vf_5=\mp\arccos\left(\frac{S}{\sqrt3\,C}\right)$, which corresponds to a boat (Figure \ref{boat6}). 
\item[\rm (ii)] When $-\arccos\left(\frac{(3-8C^2)S}{\sqrt3\,C}\right)<\vf_1<\arccos\left(\frac{(3-8C^2)S}{\sqrt3\,C}\right)$ we have either 
\begin{itemize}
\item[\rm *] $\vf_3\ne\vf_5$, which are given by \eqref{vf3nevf5}, 
\item[\rm *] $\vf_1=\vf_3=\vf_5=\pm\arccos\left(\frac{S}{\sqrt3\,C}\right)$, which corresponds to a chair (Figure \ref{chair6}). 
\end{itemize}
\end{itemize}
\item When $0<\theta<\frac{\pi}3$ i.e. $\frac{\sqrt3}2<C<1$ we have $\arccos\left(\frac{\sqrt3\,S}C\right)\le|\vf_1|\le\arccos\left(\frac{(3-8C^2)S}{\sqrt3\,C}\right)$. 
\begin{itemize}
\item[\rm (i)] When $\vf_1=\pm\arccos\left(\frac{(3-8C^2)S}{\sqrt3\,C}\right)$ we have $\vf_3=\vf_5=\pm\arccos\left(\frac{S}{\sqrt3\,C}\right)$, which corresponds to a boat (Figure \ref{IMG_2796}). 
\item[\rm (ii)] When $\vf_1=\pm\arccos\left(\frac{\sqrt3\,S}C\right)$ we have $\vf_3=\vf_5=\pm\arccos\left(-\frac{\sqrt3\,S}C\right)=\pm\left(\pi-\arccos\left(\frac{\sqrt3\,S}C\right)\right)$.
\item[\rm (iii)] When $\arccos\left(\frac{\sqrt3\,S}C\right)<|\vf_1|<\arccos\left(\frac{(3-8C^2)S}{\sqrt3\,C}\right)$ we have either 
\begin{itemize}
\item[\rm *] $\vf_3\ne\vf_5$, which are given by \eqref{vf3nevf5}, 
\item[\rm *] $\vf_1=\vf_3=\vf_5=\pm\arccos\left(\frac{S}{\sqrt3\,C}\right)$, which corresponds to a chair (Figure \ref{udchair6}). 
\item[\rm *] $\vf_1=\vf_3=\vf_5=\pm\arccos\left(-\frac{\sqrt3\,S}C\right)$, which corresponds to an ``inward crown'' (Figure \ref{cylinder}). 
\end{itemize}
\end{itemize}
\item When $\theta=0$ the hexagon degenerates to a $6$ times covered multiple edge. 
\end{enumerate}
\end{theorem}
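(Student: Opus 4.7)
The plan is to organize the case-by-case analysis of Case~I and Case~II already carried out in this section into the structured statement of the theorem, slicing by the value of $\theta$.

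First, I would note that the parametrization \eqref{P024}--\eqref{P135} automatically enforces $|P_iP_{i+1}|=1$ and $\angle P_1=\angle P_3=\angle P_5=\theta$, so the remaining conditions are $\angle P_0=\angle P_2=\angle P_4=\theta$, each of the form \eqref{angle_condition2}. Fixing $\vf_1$, the conditions $\angle P_0=\theta$ and $\angle P_2=\theta$ constitute the \emph{same} system for $(\cos\vf_5,\sin\vf_5)$ and $(\cos\vf_3,\sin\vf_3)$ with coefficients \eqref{abd}. Solvability requires $a^2+b^2-d^2\ge0$, which by \eqref{a^2+b^2-d^2} is equivalent to
$$\frac{(3-8C^2)S}{\sqrt3\,C}\le\cos\vf_1\le\frac{\sqrt3\,S}C,$$
and this interval is nonempty iff $C\ge\tfrac12$, i.e.\ $\theta\le2\pi/3$.

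For each admissible $\vf_1$, Case~I ($\vf_3\ne\vf_5$) picks the two roots of \eqref{xy} with opposite sign choices, and the excerpt's computation shows $\angle P_4=\theta$ is automatic, so $\vf_3,\vf_5$ are given by \eqref{vf3nevf5}. Case~II ($\vf_3=\vf_5=\vf$) reduces $\angle P_4=\theta$ to a quadratic in $\cos\vf$ with roots $S/(\sqrt3\,C)$ and $-\sqrt3\,S/C$, after which the conditions $\angle P_0=\angle P_2=\theta$ pin down $\vf_1$ in each sub-case, producing the chair/boat family (Case~II-1) and the inward-crown/partner family (Case~II-2, which requires $C\ge\sqrt3/2$, i.e.\ $\theta\le\pi/3$). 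Having assembled all solutions, I would partition by $\theta$: when $\theta=2\pi/3$ the admissible interval collapses to $\cos\vf_1=1$, forcing the regular hexagon; when $\pi/3<\theta<2\pi/3$ the upper bound $\sqrt3\,S/C$ exceeds $1$ so only the lower bound is effective, the endpoints $\vf_1=\pm\arccos((3-8C^2)S/(\sqrt3\,C))$ yield Case~II-1 boats (Case~II-2 being forbidden), and Case~II-1 also produces the chair in the open interior at $\vf_1=\pm\arccos(S/(\sqrt3\,C))$; when $0<\theta<\pi/3$ both bounds lie strictly inside $[-1,1]$, giving the annular range for $|\vf_1|$, with boats at the outer endpoint, the Case~II-2 partner at the inner endpoint, and both the chair and the inward crown appearing in the open interior at the respective equality points. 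The case $\theta=0$ is immediate.

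The main technical nuisance will be the sign bookkeeping in \eqref{xy} at the boundaries of the admissible $\vf_1$-range. At $\cos\vf_1=(3-8C^2)S/(\sqrt3\,C)$ one has $a^2+b^2=d^2$, so $\cos\vf_3=a/d=S/(\sqrt3\,C)$ while $\sin\vf_3=b/d=-\sin\vf_1/(3-4C^2)$; the factor $3-4C^2$ changes sign at $\theta=\pi/3$, which is precisely what forces $\vf_3$ to carry the \emph{opposite} sign to $\vf_1$ when $\pi/3<\theta<2\pi/3$ but the \emph{same} sign when $0<\theta<\pi/3$, matching the theorem statement. Once this sign flip is tracked, each clause of the theorem follows by direct substitution of the boundary values into \eqref{P35}.
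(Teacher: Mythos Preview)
Your proposal is correct and follows essentially the same approach as the paper: the theorem is a summary of the Case~I/Case~II analysis preceding it, reorganized by the value of $\theta$, and you have identified exactly this structure. Your explicit computation of the sign flip via the factor $3-4C^2$ at the boundary $\cos\vf_1=(3-8C^2)S/(\sqrt3\,C)$ is a nice addition that the paper leaves implicit but which is needed to verify the $\pm/\mp$ pattern in parts~(2)(i) and~(3)(i).
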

%
%\vspace{-0.4cm}
\begin{figure}[htbp]
\begin{center}
\begin{minipage}{.45\linewidth}
\begin{center}
\includegraphics[width=.5\linewidth]{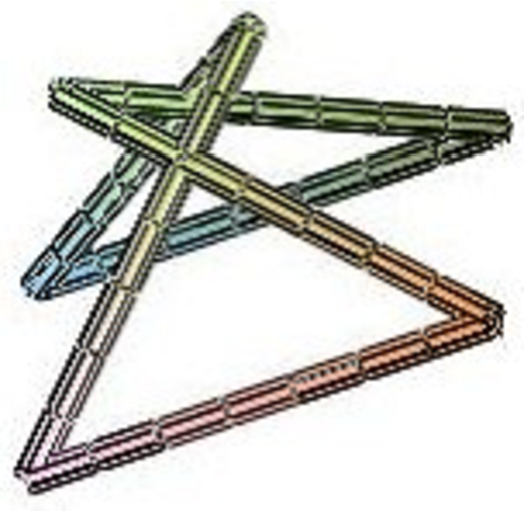}
\caption{A boat with a small bond angle}
\label{IMG_2796}
\end{center}
\end{minipage}
\hskip 0.4cm
\begin{minipage}{.45\linewidth}
\begin{center}
\includegraphics[width=0.33\linewidth]{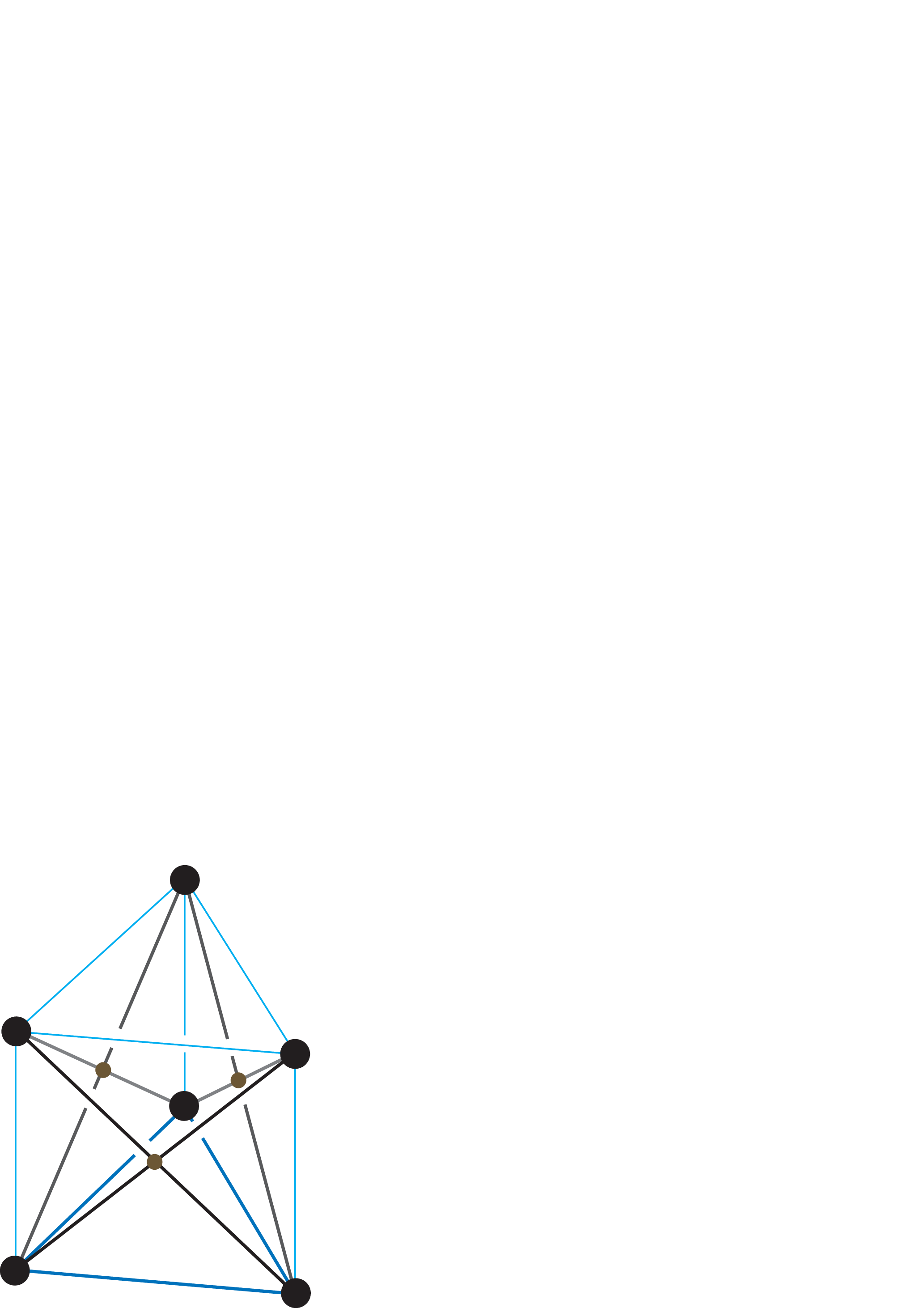}
\caption{``{\sl inward crown}'' in a prism. Two edges intersect each other in a side face of the prism.}
\label{cylinder}
\end{center}
\end{minipage}
\end{center}
\end{figure}
%\vspace{-0.4cm}
%
\begin{corollary}
The configuration space $\mathcal{M}_6(\theta)$ of $\theta$-equiangular unit equilateral hexagons $(\theta\ne\frac\pi3)$ is homeomorphic to a point if $\theta=0,\frac{2\pi}3$, the union of a circle and a pair of points if $\frac{\pi}3<\theta<\frac{2\pi}3$, the union of two circles and four points if $0<\theta<\frac{\pi}3$, and the empty set if $\theta<0$ or $\theta>\frac{2\pi}3$. 

Boat configurations are included in circles above mentioned, and chairs are isolated. 
\end{corollary}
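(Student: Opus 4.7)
The plan is to read off the topology of $\mathcal{M}_6(\theta)$ directly from the classification in the Theorem, which already organizes the admissible hexagons by the parameter $\vf_1$ and by whether $\vf_3=\vf_5$ or not. The trivial cases are immediate: $\theta=0$ gives the $6$-times covered edge, $\theta=\frac{2\pi}{3}$ the regular hexagon, and $\theta\notin[0,\frac{2\pi}{3}]$ the empty set, since the discriminant condition from the argument following \eqref{xy} forces $C\ge\frac12$. Throughout, the normalization \eqref{P024} has trivial stabilizer in $G$, so distinct triples $(\vf_1,\vf_3,\vf_5)$ correspond to distinct points of $\mathcal{M}_6(\theta)$.

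Suppose $\frac{\pi}{3}<\theta<\frac{2\pi}{3}$. The admissible range of $\vf_1$ is the single closed interval $[-\beta,\beta]$ with $\beta=\arccos\!\left(\frac{(3-8C^2)S}{\sqrt{3}\,C}\right)$. For each interior $\vf_1$ the two sign choices in \eqref{vf3nevf5} give two distinct Case I configurations (with $\vf_3\ne\vf_5$); at the endpoints $\vf_1=\pm\beta$ the discriminant $a^2+b^2-d^2$ vanishes, the two roots coalesce, and each endpoint carries a single boat. Hence the Case I family is a double cover of $[-\beta,\beta]$ glued to itself at both endpoints, i.e.\ a circle, with the two boats as the two points where the sheets merge. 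The only other configurations are the two chairs from Case II-1 at $\vf_1=\pm\arccos(S/(\sqrt{3}\,C))$; because these force $\vf_3=\vf_5=\vf_1$ while the discriminant is strictly positive at those $\vf_1$-values (so Case I there keeps $\vf_3\ne\vf_5$), the two chairs sit as isolated points off the circle.

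For $0<\theta<\frac{\pi}{3}$ the admissible set of $\vf_1$ splits into two symmetric arcs, $|\vf_1|\in[\arccos(\sqrt{3}\,S/C),\arccos((3-8C^2)S/(\sqrt{3}\,C))]$. The same discriminant analysis applies arc by arc: Case I closes up into one circle per arc, with one sheet-merging endpoint being a boat (Case II-1) and the other being the Case II-2 endpoint configuration. This produces two circles. In addition, four extra isolated configurations live on the diagonal $\vf_1=\vf_3=\vf_5$: two chairs at $\vf_1=\pm\arccos(S/(\sqrt{3}\,C))$ and two inward crowns at $\vf_1=\pm\arccos(-\sqrt{3}\,S/C)$, each again separated from the Case I circles by strict positivity of the discriminant at these interior $\vf_1$-values. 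The final clause of the corollary is then clear, since every boat is a merging point on some circle while every chair is an isolated point.

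The only part that requires real argument beyond this bookkeeping is precisely the two structural claims above: that the Case I two-sheet family closes up into a circle (rather than into two disjoint arcs) and that the chair and inward-crown configurations are genuinely isolated from the circles (rather than lying on them). Both reduce to the behaviour of the discriminant $a^2+b^2-d^2$: it vanishes exactly at the $\vf_1$-endpoints, where the two sheets of \eqref{vf3nevf5} must be identified, and it is strictly positive at the chair and inward-crown values of $\vf_1$, so the Case I solutions there stay uniformly away from the diagonal $\vf_3=\vf_5$.
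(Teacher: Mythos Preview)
Your argument is correct and is exactly the intended one: the paper states the Corollary as an immediate consequence of the Theorem without giving a separate proof, and you have supplied precisely the bookkeeping needed---reading off the allowed $\vf_1$-range, observing that the two sign choices in \eqref{vf3nevf5} form a double cover of each $\vf_1$-interval that collapses to a single point exactly where the discriminant \eqref{a^2+b^2-d^2} vanishes, and checking that the discriminant is strictly positive at the chair and inward-crown values of $\vf_1$ so those diagonal configurations are isolated from the Case~I circles. Your explicit computation of the discriminant at the chair value (yielding $4S^2(4C^2-1)>0$) and at the inward-crown value (yielding $12S^2(4C^2-3)>0$) makes the isolation claim rigorous.
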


We will see that a boat degenerates to a planar configuration when $\theta=\frac\pi3$. 

\begin{corollary} \begin{enumerate}
\item A boat and its mirror image can be joined by a path in the configuration space, i.e. they can be continuously deformed from one to the other, if and only if the bond angle satisfies $\frac{\pi}3<\theta<\frac{2\pi}3$. 
%(Remark that a boat is planar if $\theta=\frac{\pi}3$.) 
\item A boat and a chair cannot be joined by a path in the configuration space, i.e. they cannot be continuously deformed from one to the other. 
\end{enumerate}
\end{corollary}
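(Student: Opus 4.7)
The plan is to read off both statements directly from the topological classification of $\mathcal{M}_6(\theta)$ in the preceding Corollary, after identifying which connected components contain the boats, their mirror images, and the chairs. Part (2) will be immediate, while the real work for part (1) is to track mirror images through the parametrization of Theorem~3.1.

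For (2), the preceding Corollary asserts that every chair corresponds to an isolated point of $\mathcal{M}_6(\theta)$, while every boat lies on a one-dimensional circle component. An isolated point forms a path-component by itself, so a boat and a chair can never be joined by a continuous path, which gives (2) at once.

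For (1), I would first pin down what ``mirror image of a boat'' means in our parametrization. Since the $z$-coordinate in \eqref{P135} is an odd function of each $\vf_i$, reflection across $\{z=0\}$ corresponds to $(\vf_1,\vf_3,\vf_5)\mapsto(-\vf_1,-\vf_3,-\vf_5)$. Applying this to the explicit boat formulas in Theorem~3.1 (2)(i) and (3)(i), a boat and its mirror image are precisely the two configurations obtained by opposite sign choices of $\vf_1=\pm\arccos\bigl((3-8C^2)S/(\sqrt3\,C)\bigr)$ with $\vf_3=\vf_5$ matched in sign. I would then split into two cases. For $\pi/3<\theta<2\pi/3$, Theorem~3.1(2) allows $\vf_1$ to range over the full closed interval between these two endpoints, and the two branches of \eqref{vf3nevf5} join smoothly at the endpoints (where $\vf_3=\vf_5$), so both boats sit on the same single circle and are path-connected. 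For $0<\theta<\pi/3$, Theorem~3.1(3) forbids the middle range $|\vf_1|<\arccos(\sqrt3\,S/C)$, splitting the admissible values into two disjoint subintervals that produce the two distinct circle components of the preceding Corollary, each containing exactly one of the two boats, so no path exists. The extremal values $\theta\in\{0,2\pi/3\}$ contribute no genuine boat and so pose no obstruction to the ``only if'' direction.

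The main obstacle is the exceptional case $\theta=\pi/3$, which is explicitly excluded from Theorem~3.1 and is described in the introduction as a degenerate limit where the boat becomes planar and additional families of configurations appear. To make the ``only if'' of (1) fully airtight I would need to check, using the supplementary analysis of the $(\theta,\vf_i)=(\pi/3,\pi)$ case deferred by the author, that at $\theta=\pi/3$ the boat configuration still has a mirror image lying in a separate path component, despite the new deformation families that now bridge boats to the collapsed inward crown. Modulo this boundary verification, both parts of the Corollary are established.
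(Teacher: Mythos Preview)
Your argument is essentially what the paper has in mind: the corollary is stated without proof as an immediate consequence of the preceding topological classification, and your identification of which components carry boats, mirror boats, and chairs is correct.

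On your flagged obstacle at $\theta=\pi/3$: the corollary tacitly inherits the standing hypothesis $\theta\ne\pi/3$ from Theorem~3.1 and the preceding corollary, so no separate verification is actually required. Note, though, that the resolution you anticipate would fail: the later analysis of the exceptional case shows that $\mathcal{M}_6(\pi/3)\setminus\{\text{chairs}\}$ is the \emph{connected} space $X$ (the doubled $1$-skeleton of a tetrahedron), and the boats with $\{\vf_1,\vf_3,\vf_5\}=\{\pi,\pm\arccos(1/3),\pm\arccos(1/3)\}$ all lie in $X$. Hence at $\theta=\pi/3$ a boat and its mirror image \emph{can} be joined by a path. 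The strict inequality $\pi/3<\theta$ in part~(1) should therefore be read under the blanket exclusion of $\theta=\pi/3$, not as a claim that the deformation is impossible at that angle.
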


\smallskip
(2) Finally we study the exceptional case $\theta=\frac\pi3$, when the cases when $\vf_j=\pi$ $(j=1,3,5)$ have not been considered yet. 

When $\theta=\frac\pi3$ the equation \eqref{angle_condition} becomes 
$$
(\cos\vf_i+1)(\cos\vf_{i+2}+1)-2\sin\vf_i\sin\vf_{i+2}=0.
$$
If $\angle P_2=\frac\pi3$ then $\vf_3$ is determined by $\vf_1$ as follows; 
\begin{itemize}
\item if $\vf_1=\pi$ then $\vf_3$ is arbitrary, 
\item if $\vf_1=0$ then $\vf_3=\pi$, 
\item if $\vf_1\ne0,\pi$ then $\vf_3=\pi$ or $f(\vf_1)$ ($f(\vf_1)\ne\pi$), where $f(\vf)$ ($\vf\ne\pm\pi$) is given by 
\begin{equation}\label{f(phi)}
(\cos f(\vf), \,\sin f(\vf))=\left(\frac{-(\cos\vf+1)^2+4\sin^2\vf}{(\cos\vf+1)^2+4\sin^2\vf},\,\frac{4\sin\vf (\cos\vf+1)}{(\cos\vf+1)^2+4\sin^2\vf}\right).
\end{equation}
\end{itemize}
Remark that $f(0)=\pi$ and that $f(\vf)=\vf$ if and only if $\vf=\pm\arccos(\frac13)$. 
Put $f(\pi)=0$ as $\displaystyle \lim_{\vf\to\pi}f(\vf)=0$. 
\begin{theorem}
Suppose a $\frac\pi3$-equiangular unit equilateral hexagon is parametrized by the agnles $\vf_1, \vf_3$, and $\vf_5$ by \eqref{P024}, \eqref{P135}. Then we have
$$
\{\vf_1,\vf_3,\vf_5\}=\{\pi,\vf,f(\vf)\},\>\{\pi,\pi,\vf\},\>\>\mbox{ or }\>\>\left\{\pm\arccos\left(\frac13\right),\pm\arccos\left(\frac13\right),\pm\arccos\left(\frac13\right)\right\}, 
$$
where $\vf$ is arbitrary. The first case contains a boat when $\{\vf_1,\vf_3,\vf_5\}=\{\pi,\pm\arccos\left(\frac13\right),\pm\arccos\left(\frac13\right)\}$, and the last triples correspond to a chair. 
%Then $\{\vf_1,\vf_3,\vf_5\}$ is either $\{\pi,\vf,f(\vf)\}$ or $\{\pi,\pi,\vf\}$ or $\{\pm\arccos(\frac13),\pm\arccos(\frac13),\pm\arccos(\frac13)\}$ which corresponds to a chair, where $\vf$ is arbitrary. 
\end{theorem}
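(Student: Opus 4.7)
The plan is to exploit a hidden involutive structure in the map $f$ of \eqref{f(phi)}, after which the three cyclic angle conditions collapse to a tiny case analysis. Substituting $C = \sqrt{3}/2$ and $S = 1/2$ into \eqref{angle_condition} yields the equation displayed in the text,
\[
(\cos\vf_i + 1)(\cos\vf_{i+2}+1) = 2\sin\vf_i \sin\vf_{i+2},
\]
required for $i = 1, 3, 5$ (corresponding to $\angle P_2 = \angle P_4 = \angle P_0 = \pi/3$). I would first confirm the bulleted single-equation analysis preceding the theorem by the method of \eqref{xy}: this equation is linear in $(\cos\vf_{i+2},\sin\vf_{i+2})$, so its intersection with the unit circle consists of two points, which direct calculation identifies as $\vf_{i+2} = \pi$ and $\vf_{i+2} = f(\vf_i)$; the degenerate case $\vf_i = \pi$ collapses both sides to zero, and $\vf_i = 0$ forces the unique solution $\vf_{i+2} = \pi$.

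The crucial observation I would then establish is that $f$ is an involution. Setting $t = \tan(\vf/2)$ and simplifying \eqref{f(phi)} via $\cos\vf + 1 = 2/(1+t^2)$ and $\sin\vf = 2t/(1+t^2)$, one obtains $\cos f(\vf) = (4t^2 - 1)/(4t^2+1)$ and $\sin f(\vf) = 4t/(4t^2+1)$, whence $\tan(f(\vf)/2) = 1/(2t)$. Thus $f \circ f = \mathrm{id}$ wherever defined, and the fixed points of $f$ are exactly the angles with $t^2 = 1/2$, equivalently $\cos\vf = 1/3$, namely $\vf = \pm\arccos(1/3)$.

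With this in hand the classification of triples $(\vf_1,\vf_3,\vf_5)$ satisfying all three cyclic relations splits into two cases. If some $\vf_j = \pi$, say $\vf_1 = \pi$, then the equations at $i = 1$ and $i = 5$ become trivial and only the relation between $\vf_3$ and $\vf_5$ remains; the single-equation analysis then forces $\{\vf_1,\vf_3,\vf_5\} = \{\pi,\pi,\vf\}$ or $\{\pi,\vf,f(\vf)\}$. Otherwise no $\vf_j$ equals $\pi$, and the same analysis (together with the fact that $\vf_j = 0$ would force a neighbour to be $\pi$) forces $\vf_{i+2} = f(\vf_i)$ for every $i$; applying the involutive identity three times around the cycle gives $\vf_1 = f^3(\vf_1) = f(\vf_1)$, so $\vf_1$ is a fixed point of $f$ and $\vf_1 = \vf_3 = \vf_5 = \pm\arccos(1/3)$, the chair. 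Finally, inserting $\vf = \pm\arccos(1/3)$ into $\{\pi,\vf,f(\vf)\}$ (using $f(\vf) = \vf$ at a fixed point) and substituting into \eqref{P135} recovers the asserted boat.

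The main technical step is isolating the identity $\tan(f(\vf)/2)\tan(\vf/2) = 1/2$, which makes $f$ an involution with exactly the two fixed points $\pm\arccos(1/3)$; once it is in hand the entire theorem reduces to bookkeeping on those fixed points and the single special value $\pi$.
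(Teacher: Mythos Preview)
Your argument is correct and follows the paper's setup exactly: the paper itself only records the single-equation analysis (the bulleted list) and the fixed points of $f$, and then states the theorem without further justification. Your contribution is to make explicit the one step the paper leaves to the reader, namely why the case with no $\vf_j=\pi$ collapses to the chair. You do this via the pleasant half-angle identity $\tan\bigl(f(\vf)/2\bigr)\tan(\vf/2)=\tfrac12$, which shows $f\circ f=\mathrm{id}$ and hence $f^3=f$; the paper does not state this identity, but the involution is implicitly forced by the manifest symmetry of the relation $(\cos\vf_i+1)(\cos\vf_{i+2}+1)=2\sin\vf_i\sin\vf_{i+2}$ in $\vf_i$ and $\vf_{i+2}$, so your route and the paper's intended one are essentially the same. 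Your explicit formula is a tidy bonus, since it locates the fixed points $\pm\arccos(1/3)$ immediately from $t^2=\tfrac12$.
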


\begin{corollary}
The configuration space $\mathcal{M}_6(\frac\pi3)$ of equilateral and $\frac\pi3$-equiangular hexagons is homeomorphic to the union of a pair of points and the space $X$ illustrated in Figure \ref{singular_hexagons} which is a $1$-skelton of a tetrahedron with edges being doubled. 
\end{corollary}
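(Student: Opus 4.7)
The plan is to read off $\mathcal{M}_6(\pi/3)$ directly from the preceding theorem. The two chair configurations, where $\vf_1=\vf_3=\vf_5=\pm\arccos(1/3)$ with a common sign, contain no coordinate equal to $\pi$, while every other configuration in the theorem has at least one $\vf_i=\pi$. Hence the chairs are isolated and provide the pair of points in the statement, and it remains to identify the non-chair locus with the graph $X$.

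I would first switch to the half-angle variables $t_i=\tan(\vf_i/2)\in\mathbb{R}\cup\{\infty\}$. Using $\cos\vf_i+1=2/(1+t_i^2)$ and $\sin\vf_i=2t_i/(1+t_i^2)$, the relation $\vf_{i+2}=f(\vf_i)$ of \eqref{f(phi)} becomes the hyperbola $t_it_{i+2}=1/2$, whose two branches, extended continuously to the torus $S^1\times S^1$, are arcs running from $(\vf_i,\vf_{i+2})=(\pi,0)$ to $(0,\pi)$. The angle condition at the vertex between edges $i$ and $i+2$ then reads: either $t_it_{i+2}=1/2$, or one of $\vf_i,\vf_{i+2}$ equals $\pi$.

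Next I would single out four distinguished vertices: $V_0=(\pi,\pi,\pi)$, and for each $k\in\{1,3,5\}$ the point $V_k$ with $\vf_k=0$ and the other two angles equal to $\pi$. The non-chair locus splits into two families of arcs. First, for each pair $\{i,j\}\subset\{1,3,5\}$, the full circle $\{\vf_i=\vf_j=\pi\}$ (parametrized by the remaining coordinate) lies in $\mathcal{M}_6(\pi/3)$, and the two points $V_0$ and $V_k$ (with $\{k\}=\{1,3,5\}\setminus\{i,j\}$) cut it into two closed arcs, yielding two edges from $V_0$ to $V_k$. Second, for each $i\in\{1,3,5\}$, the two branches of the hyperbola $t_jt_k=1/2$ in the slice $\{\vf_i=\pi\}$ (with $\{j,k\}=\{1,3,5\}\setminus\{i\}$) give two further arcs, each running from $V_j$ to $V_k$.

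Collecting these, one obtains $4$ vertices with exactly two edges between every pair, for a total of $12$ edges, which is precisely the $1$-skeleton of a tetrahedron with edges doubled; together with the two isolated chairs this is $\mathcal{M}_6(\pi/3)$. The step requiring most care is the matching of endpoints: one must check that as $t_j\to 0$ or $t_j\to\pm\infty$, the corresponding hyperbola branch in slice $\{\vf_i=\pi\}$ limits, inside $S^1\times S^1\times S^1$, to the vertex ($V_k$ or $V_j$) lying on the intersection circle $\{\vf_i=\vf_k=\pi\}$ or $\{\vf_i=\vf_j=\pi\}$. Once this endpoint matching is verified, the six families of arcs glue into the claimed graph $X$ and the corollary follows.
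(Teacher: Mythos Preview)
Your proposal is correct and follows essentially the same decomposition the paper sketches in the caption to Figure~\ref{singular_hexagons}: the four vertices $(\pi,\pi,\pi)$, $(0,\pi,\pi)$, $(\pi,0,\pi)$, $(\pi,\pi,0)$, three circles of type $\{\vf_i=\vf_j=\pi\}$ joining $V_0$ to each $V_k$, and three circles of type $\{\vf_i=\pi,\ \vf_k=f(\vf_j)\}$ joining the remaining pairs. Your half-angle substitution $t_i=\tan(\vf_i/2)$, which turns the relation $f$ into $t_jt_k=\tfrac12$, is a clean way to see that each of the latter closes up to two arcs with the correct endpoints; the paper leaves this verification to the reader.
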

\begin{figure}[htbp]
\begin{center}
\includegraphics[width=.8\linewidth]{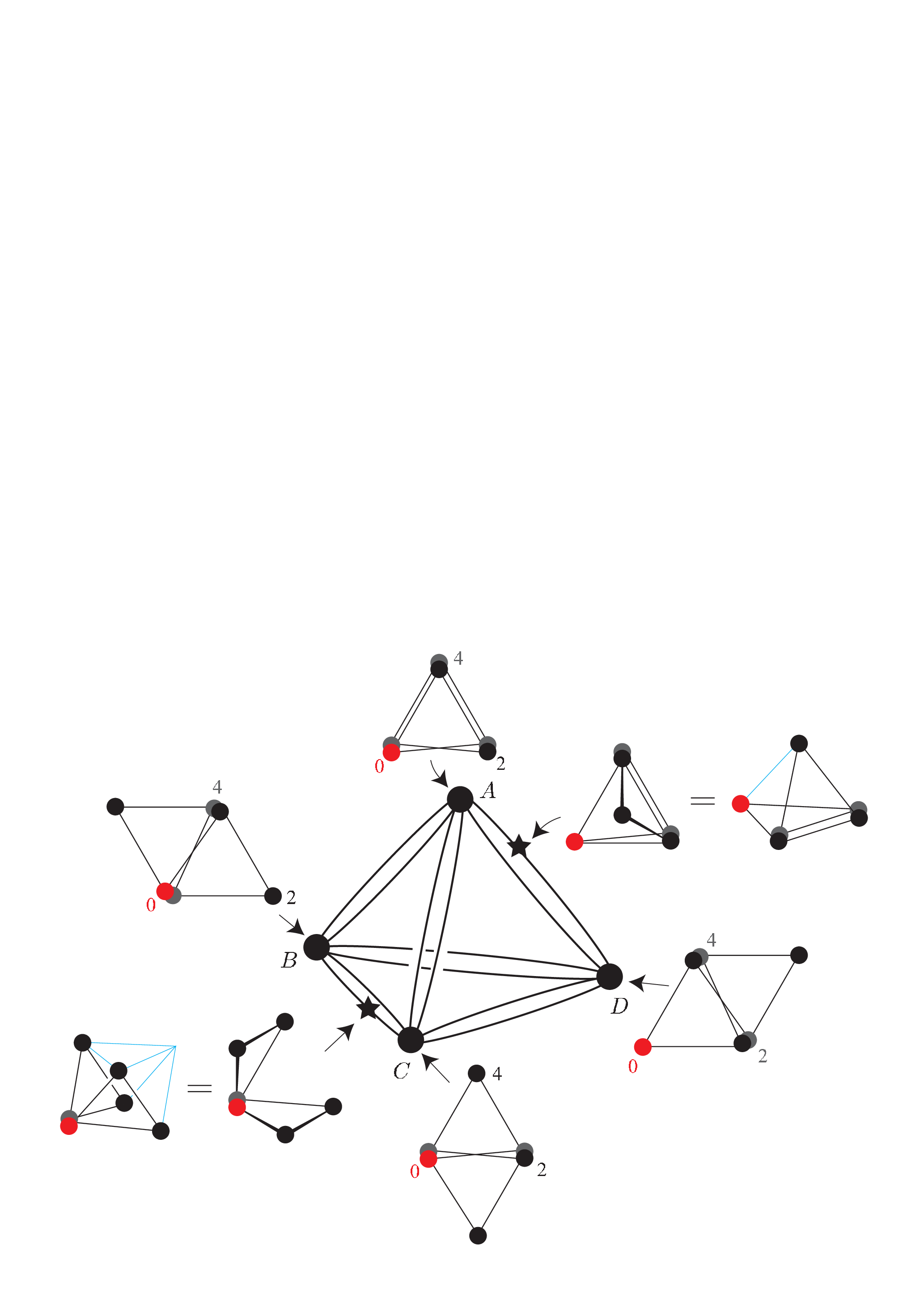}
\caption{The configuration space $X=\mathcal{M}_6(\frac{\pi}3)\setminus\{\rm chairs\}$. The numbers $0,2$, and $4$ in the figure indicate the vertices $P_0,P_2$, and $P_4$ respectively. 
The figures of six hexagons around $X$ are seen from above. The non-planar configurations left below is a boat, when $P_i$ occupy five vertices of a regular octahedron. \newline
\indent
 The four vertices $A,B,C$, and $D$ of $X$ correspond to planar configurations parametrized by $(\vf_1,\vf_3,\vf_5)=(\pi,\pi,\pi),\,(\pi,\pi,0),\,(0,\pi,\pi)$, and $(\pi,0,\pi)$ respectively. %; the vertex $A$ corresponds to a double-covered regular triangle and the other three to planar configurations consisting of two regular triangles. 
%
%The configurations corresponding to vertices $A$ and $D$ have the positions of $P_0,P_1,P_2,P_4$, and $P_5$ in common. 
The circle through $A$ and $D$ consists of the configurations %where the vertex $P_3$ rotates around the axis $P_2P_4$ 
parametrized by $(\vf_1,\vf_3,\vf_5)=(\pi,\vf,\pi)$ $(-\pi\le\vf\le\pi)$. 
%
%On the other hand, the configurations corresponding to vertices $B$ and $C$ have the positions of $P_0,P_2,P_3$, and $P_4$ in common. 
The circle hrough $B$ and $C$ consists of the configurations %where the vertex $P_3$ rotates around the axis $P_2P_4$ and $P_5$ around $P_4P_0$ according to the position of $P_3$. 
parametrized by $(\vf_1,\vf_3,\vf_5)=(\vf,\pi,f(\vf))$ $(-\pi\le\vf\le\pi)$. 
}
\label{singular_hexagons}
\end{center}
\end{figure}

\bigskip
The author would like to close the article with an open problem: find a new invarian which can show that a boat cannot be deformed continuously into a chair. 

%%%%%%%%%%%%%%%%%%%%%%%%%%%%%%%%%%%%%%%%%%%%%%%%%%%%%%%%%%%%%%%%%%%%%%%%%%%%%%%%%%%
%%%%%%%%%%%%%%%%%%%%%%%%%%%%% comment out begins here %%%%%%%%%%%%%%%%%%%%%%%%%%%%%
%%%%%%%%%%%%%%%%%%%%%%%%%%%%%%%%%%%%%%%%%%%%%%%%%%%%%%%%%%%%%%%%%%%%%%%%%%%%%%%%%%%
\if0
\begin{theorem}
The configuration space $\mathcal{M}_6(\theta)$ of equilateral and $\theta$-equiangular hexagons is given as follows. The right column explains the shapes of the corresponding polygons. 
\[
%\mathcal M_6(\theta)\cong
%
\left\{\hspace{-0.1cm}
\begin{array}{lll}
\{\mbox{\rm $1$ point}\} & (\theta=0) & \mbox{\rm $6$-folded edge}  \\[1mm]
S^1_1\cup S^1_2\cup \{\mbox{\rm $4$ points}\} & (0<\theta<\frac{\pi}3) & \!{\left\{\!\!\begin{array}{l}
\mbox{\rm $S^1_1$ consists of boats (Figure \ref{IMG_2796}) and $S^1_2$ their mirror images}\\
%\mbox{\rm $S^1_2$ consists of their mirror images,} \\
\mbox{\rm The 4 points correspond to a chair (Figure \ref{udchair6}),}\\
\mbox{\rm an ``inward crown'' (Figure \ref{cylinder}) and their mirror images}
\end{array}\right.} \\[6mm]
X\cup \{\mbox{\rm $2$ points}\} & (\theta=\frac{\pi}3) & \!\left\{\!\!\begin{array}{l}\mbox{\rm $X$ is illustrated in Figure \ref{singular_hexagons}}\\ \mbox{\rm The $2$ points correspond to a chair and its mirror image,}\\
\mbox{\rm when $P_i$ occupy vertices of a regular octahedron}
\end{array} \right.  \\[6mm]
S^1\cup \{\mbox{\rm $2$ points}\} & (\frac{\pi}3<\theta<\frac{2\pi}3) & \!\left\{\!\!\begin{array}{l}
\mbox{\rm $S^1$ consists of boats (Figure \ref{boat6})} \\
\mbox{\rm The $2$ points correspond to a chair (Figure \ref{chair6})}\\
\mbox{\rm and its mirror image}\end{array} \right. \\[5mm]
\{\mbox{\rm $1$ point}\} & (\theta=\frac{2\pi}3) & \mbox{\rm regular hexagon}  \\[2mm]
\>\emptyset &  \mbox{\rm otherwise} & 
\end{array}
\right.
\]
\end{theorem}

%
%\begin{lemma}{\rm (Cauchy's Arm Lemma (\cite{D})} \label{Cauch_arm_lemma} 
%given a convex polygon, transform it into another convex polygon while keeping all but one of the sides constant. If some or all of the internal angles at the vertices increase, then the remaining side gets longer. Conversely, if some or all of the internal angles decrease, the remaining side becomes shorter.
%\end{lemma}
%

\textcolor[named]{Plum}{
\section{Space of shapes of equilateral and equiangular polygons}
\begin{definition} \rm 
Let $\frak{S}'$ be a subgroup of the $n$-the symmetric group $\frak{S}_n$ generated by a whole cyclic perutation $(0\,1\,\cdots\,n-1)$. 
Let $\sim$ be an equivalence relation of $\mathcal{M}_n(\theta)$ given by 
$$
[(P_0,\ldots,P_{n-1})]\sim[(P_{\sigma(0)},\ldots,P_{\sigma(n-1)})] \hspace{0.5cm}\exists \sigma\in\frak{S}',
$$
where the suffixes are considered modulo $n$. \\
Define the {\em space of the shapes of $\theta$-equiangular unit equilateral $n$-gons} by $\mathcal{S}_n(\theta)=\mathcal{M}_n(\theta)/\sim$. 
\end{definition}
}

\fi
%%%%%%%%%%%%%%%%%%%%%%%%%%%%%%%%%%%%%%%%%%%%%%%%%%%%%%%%%%%%%%%%%%%%%%%%%%%%%%%%%%%
%%%%%%%%%%%%%%%%%%%%%%%%%%%%% comment out ens here %%%%%%%%%%%%%%%%%%%%%%%%%%%%%%%%
%%%%%%%%%%%%%%%%%%%%%%%%%%%%%%%%%%%%%%%%%%%%%%%%%%%%%%%%%%%%%%%%%%%%%%%%%%%%%%%%%%%

\bibliographystyle{plain}

\begin{thebibliography}{[Kp-M]}

\bibitem[C]{C} Gordon M. Crippen, { Exploring the conformation space of cycloalkanes by linearized embedding}. J. Comput. Chem., 13(3) (1992), 351\,--\,361.

\bibitem[D-OR]{D} E.D.Demaine and J. O'Rourke, { Geometric Folding Algorithms: Linkages, Origami, Polyhedra}. Cambridge University Press, (2007).

%\bibitem[Km-T]{K-T} Y. Kamiyama and M. Tezuka, {\em Symplectic volume of the moduli space of spatial polygons}. J. Math. Kyoto Univ. 39 (1999), 557\,--\,575.


\bibitem[Kp-M]{K-M} M. Kapovich and J. Millson, { The symplectic geometry of polygons in Euclidean space}. J. Differential Geom. 44(1996), 479\,--\,513.

%\bibitem[O]{O} J. O'Hara, {\em The configuration space of planar spidery linkages}. Topology Appl 154 (2007), 502\,--\,526. 

\end{thebibliography}

\bigskip \noindent
Department of Mathematics and Information Sciences, \\
Tokyo Metropolitan University, \\
1-1 Minami-Ohsawa, Hachiouji-Shi, Tokyo 192-0397, JAPAN. \\
E-mail: ohara@tmu.ac.jp\\
Fax: 81-42-677-2481

\end{document}